\title{Functions and differentials on the non-split Cartan modular curve of level 11}
\author{Julio Fern\'andez \ and \ Josep Gonz\'alez
\footnote{The authors are partially supported  by DGICYT Grant MTM2012-34611}}
\newtheorem{prop}{Proposition}[section]
\newtheorem{lemma}{Lemma}[section]
\newtheorem{thm}{Theorem}[section]
\newtheorem{cor}{Corollary}[section]
\newtheorem{rem}{Remark}[section]
\theoremstyle{definition}
\theoremstyle{remark}
\numberwithin{equation}{section}
\newcommand{\ds}{\displaystyle}
\newcommand{\Q}{\mathbb{Q}}
\newcommand{\Qbar}{\overline{\mathbb{Q}}}
\newcommand{\Z}{\mathbb{Z}}
\newcommand{\F}{\mathbb{F}}
\newcommand{\PP}{\mathbb{P}}
\newcommand{\C}{\mathbb{C}}
\newcommand{\GL}{\operatorname{GL}}
\newcommand{\SL}{\operatorname{SL}}
\newcommand{\Jac}{\operatorname{Jac}}
\newcommand{\new}{\operatorname{new}}
\newcommand{\dv}{\operatorname{div}}
\newcommand{\cC}{{\mathcal C}}
\newcommand{\cX}{{\mathcal X}}
\begin{document}
\maketitle

\begin{abstract}
The genus $4$ modular curve $X_{ns}(11)$ attached to a non-split Cartan group of level~$11$ admits a model
defined over $\Q$. We compute generators for its function field in terms of Siegel modular functions.
We also show that its Jacobian is isomorphic over $\Q$ to the new part of the Jacobian of the classical
modular curve $X_0(121)$.
\end{abstract}

\section*{Introduction}

When making computations on the modular curve attached to a congruence subgroup~$\Gamma$ of level~$N$
containing $\Gamma_1(N)$, one counts on the $\Q$--rationality of the cusp at infinity. Moreover, several well-known
classical modular tools, such as the Dedekind $\eta$-function, are then available, together with explicit methods to find
a basis for the vector space of weight~$2$ cuspforms, that is, a basis of regular differentials on the curve.
Any such a cuspform $f(\tau)$ has period $1$ on the complex upper half-plane, hence admits a Fourier expansion
in $e^{2\pi i\tau}$ and, furthermore, the minimal field of definition for $f(\tau)$ is the number field generated by
its Fourier coefficients. All of this provides some crucial help in order to compute
generators for the function fields of the curve and its quotients.
See, for instance, \cite{gon91}, \cite{shi95}, \cite{BGGP}, \cite{Gon}.

\vspace{2truemm}

By constrast, this kind of computation becomes more complicated and laborious whenever~$\Gamma$
does not contain $\Gamma_1(N)$. This article is meant to serve as an explicit example in which $\Gamma$
is the congruence subgroup corresponding to a non-split Cartan group of level~$N\!=\!11$.
The attached modular curve $X_{ns}(11)$ has genus $4$ and is not hyperelliptic. Though it is defined over $\Q$,
its cusps are not. Even worse, for a function on the curve, its minimal field of definition does not
necessarily coincide with the number field generated by its Fourier coefficients. The interest on the modular
curves $X_{ns}(N)$ is closely related to the class number one problem and comes from the classification in \cite{Se}
of the cases in which the Galois representation attached to the $N$-torsion of an elliptic curve is not surjective.

\vspace{2truemm}

An equation for $X_{ns}(11)$ is derived in \cite{DFGS} from a certain elliptic quotient $X^+_{ns}(11)$
which was first handled in \cite{Lig}. This is done by combining a result of \cite{Baran} on
the ramification locus of the modular $j$--function together with an expression given in \cite{Hal} for
this function in terms of the Weierstrass coordinates of $X^+_{ns}(11)$.
The proof relies strongly on the fact that this elliptic curve has trivial rational $2$-torsion.
Through a different method not using this proof, in Section \ref{equation} we compute generators for the
function field of $X_{ns}(11)$ satisfying the equation given in \cite{DFGS}. They can be explicitly given
in terms of Siegel modular functions. For that purpose, some information on the modular curve $X_{ns}(11)$
and its cusps is recalled in Section \ref{secXnsp}.

\vspace{2truemm}

A basis of regular differentials for $X_{ns}(11)$ can be obtained from the equation in Section \ref{equation}
to produce a period matrix for the Jacobian of the curve. According to \cite{Chen}, this Jacobian is isogenous
over $\Q$ to the new part of the Jacobian of the genus $6$ modular curve $X_0(121)$. In Section \ref{Jacobian}
we show computationally that these two abelian varieties are in fact isomorphic over~$\Q$ and, nevertheless,
$X_0(121)$ is not a covering of $X_{ns}(11)$.

\vspace{2truemm}

\section{The modular curve $X_{ns}(11)$}\label{secXnsp}

Let $p$ be an odd prime. A non-split Cartan group of level $p$ is a subgroup of $\GL_2(\F_p)$
in the conjugacy class of
\begin{equation*}
\left\{\begin{pmatrix}m&\alpha\,n\\n&m\end{pmatrix}  \ \,\Big\vert\, \
(0,0)\neq(m,n)\in\F^2_{\!p}\right\}
\end{equation*}
for any non-square $\alpha$ in $\F^{^*}_{\!p}$. Such a matrix group and its normalizer in $\GL_2(\F_p)$
define, as shown in \cite{Lig} or \cite{Ma77}, respective canonical curves $X_{ns}(p)$ and $X_{ns}^+(p)$
defined over $\Q$, along with projections
$$\xymatrix{
 X(p)\, \ar@{->}[r]^{\!\!\!\!\!\pi} & \,X_{ns}(p)\, \ar@{->}[r]^{\pi^{+}} &  \,X^+_{ns}(p)\,
 \ar@{->}[r]^{j} & \,X(1),}
$$
where $X(p)$ is a model over $\Q$ for the classical modular curve attached to the kernel of the mod~$p$
reduction map $\SL_2(\Z)\!\longrightarrow\!\SL_2(\F_{\!p})$. The map $\pi^+$ has degree $2$ and is defined
over $\Q$, so that $X_{ns}^+(p)=X_{ns}(p)/\langle w\rangle$ for an involution $w$ of $X_{ns}(p)$ defined over $\Q$.
This involution is conjectured to be the the only non-trivial automorphism of $X_{ns}(p)$ whenever $p>11$.
As for the maps $\pi$ and $j$, they have degree $(p+1)/2$ and $p\,(p-1)/2$, respectively.
We refer to \cite{Baran}, where formulas for the genus and the number of cusps for $X_{ns}(p)$ and $X_{ns}^+(p)$ are given.

\vspace{2truemm}

An explicit description of the cusps is required for our purposes. There is a bijection between the set of cusps
of $X(p)$ and the quotient set of \,$\F^2_{\!p}\setminus\{(0,0)\}$ by the equivalence relation identifying
a pair~$(m,n)$ with the pair~$(-m,-n)$. This bijection comes from the reduction mod~$p$ of the coordinates,
taken to be coprime, of every point in~$\PP^1(\Q)$. The $(p^2-1)/2$ cusps thus obtained
map to the cusps of $X^+_{ns}(p)$, which are represented by the pairs of the form $(m,0)$ for $m=1,\dots,(p-1)/2$.
On the above canonical model for~$X^+_{ns}(p)$, the set of cusps is a Galois orbit defined over
the maximal real subfield of the $p$-th cyclotomic extension of $\Q$. Each of these cusps has two preimages
on $X_{ns}(p)$ which are switched by the involution $w$, and the set of cusps on $X_{ns}(p)$ is a Galois orbit
defined over the $p$-th cyclotomic extension of $\Q$ \cite{Se89}.

\vspace{2truemm}

From now on, we stick to the case $p=11$, namely to the only prime $p$ for which $X^+_{ns}(p)$
has genus $1$. As a matter of fact, $X^+_{ns}(11)$ is an elliptic curve.
A Weierstrass equation for $X^+_{ns}(11)$ was determined in \cite{Lig}:
\begin{equation}\label{xns+}
y^2\,+\,y\,=\,x^3\,-\,x^2\,-7x\,+\,10.
\end{equation}
It has conductor $121$ and label B1 in \cite{Cre}.
Its Mordell-Weil group is infinite and generated by the point $P\!:=\!(4,-6)$.
So $X_{ns}^+(11)$, as an algebraic curve, has infinitely many automorphisms defined over \,$\Q$.
They are generated by the elliptic involution
$$(x,y)\,\longmapsto\,(x,-1-y)$$\\[-20pt]
and the translation-by-$P$ map\\
$$(x,y)\,\longmapsto\,
\left(\dfrac{\,4x^2+x-2+11y\,}{\left(x-4\right)^2}\,, \,
\dfrac{\,\left(2x^2+17x-34+11y\right)\left(1-3x\right)}{\left(x-4\right)^3}\right)\!.$$
Among the corresponding infinitely many pairs of functions in $\Q\big(X_{ns}^+(11)\big)$ satisfying (\ref{xns+}),
in Section \ref{equation} we compute functions for which the projection
$$j\ \colon \xymatrix{X^+_{ns}(11) \ \ar@{->}[r] & \ X(1)}$$\\[-10pt]
is given by the rational function
\begin{footnotesize}
\begin{eqnarray}\label{j}
\begin{split}
j(x,y) \,\,=\,\,  &
\left(7x^2+16x-44+\left(x+18\right)y\right)^2\left(4x^3+x^2-24x-11-\left(x^2+3x+5\right)y\right)^{11}\\
&
\left(11\left(y-5\right)\left(x^2+3x-6\right)\!\left(3x^2-3x-14-\left(2x+3\right)y\right)\right)^3\\
&
\left(\left(12x^3+28x^2-41x-62+\left(3x^2+20x+37\right)y\right)\!\left(x^3+4x^2+x+22-\left(3x-1\right)y\right)\right)^3\\
& \left(x+2\right)^{-12}\left(x-4\right)^{-14}\left(x^5-9x^4-16x^3+53x^2+37x-23\right)^{-11}
\end{split}
\end{eqnarray}
\end{footnotesize}
\!\!obtained through the above translation from the expression for $j$\, in Section 2.2 of \cite{Hal}.
This fixes the $j$-invariants of the points on $X^+_{ns}(11)$ corresponding to the origin $O$ of (\ref{xns+})
and to the point~$P$\!, \,namely \,$j(O)=2^3\,3^3\,11^3$\, and  \,$j(P)=-2^{15}\,5^3\,3$.

\vspace{2truemm}

The bielliptic curve $X_{ns}(11)$ has genus $4$. We note that Proposition~1 in~\cite{Ogg} implies that it is not hyperelliptic.
The equation derived in \cite{DFGS} for this curve, as a degree $2$ covering of the elliptic curve $X^+_{ns}(11)$,
is
\begin{equation}\label{xns}
t^2 \,=\, -(4x^3\,+\,7x^2\,-6x\,+\,19).
\end{equation}
In particular, the curve admits an \emph{exotic} involution $\varrho$ inducing the elliptic involution on $X^+_{ns}(11)$.
The goal of \cite{DFGS} is proving that $w$ and $\varrho$ generate the full automorphism
group of $X_{ns}(11)$.

\vspace{2truemm}

\section{The function field of $X_{ns}(11)$}\label{equation}

In this section we compute generators $X, Y\!, T$ for the function field of $X_{ns}(11)$ satisfying
the equations (\ref{xns+}) and (\ref{xns}).
They can be made explicit from certain modular functions of level~$11$ with cuspidal divisor.
We must first gather the required ingredients at the starting point of our computations; the main reference is \cite{Ku-Lang}.

\vspace{2truemm}

Let us fix the $11$-th root of unity \,$\zeta\!:=\!e^{\,2\pi i/11}$ together with the generator
\mbox{\,$\epsilon:=\zeta+\zeta^{-1}$} for the maximal real subfield of \,$\Q(\zeta)$.
For $\tau$ in the complex upper half-plane, let us take\, $q_{\scriptscriptstyle\ast}\!:=\!e^{\,2\pi i\tau /11}$ as
local parameter of level~$11$ \emph{at infinity}.
The set of modular functions of level~$11$ whose Fourier \mbox{$q_{\scriptscriptstyle\ast}$--expansion} has coefficients in \,$\Q(\zeta)$
can be naturally identified with the function field of the modular curve~$X(11)$ over that cyclotomic extension.
Any such a function can be uniquely \emph{normalized}\, by a suitable constant
so that the first non-zero coefficient of its Fourier $q_{\scriptscriptstyle\ast}$--expansion \mbox{is $1$.}

\vspace{2truemm}

To every cusp of $X(11)$, represented by a pair $(m,n)$ as in Section \ref{secXnsp},
one can attach the Siegel function
$$S_{\scriptscriptstyle (m,n)} :=
q_{\scriptscriptstyle\ast}^{\,11/2\,B_2(m/11)}\,
\big(1 - \zeta^n q_{\scriptscriptstyle\ast}^{\,m}\big)\prod_{k\geq 1}
\big(1-\zeta^n q_{\scriptscriptstyle\ast}^{\,11k+m}\big)
\big(1-\zeta^{-n} q_{\scriptscriptstyle\ast}^{\,11k-m}\big),$$
where \,$B_2(x)=x^2-x+1/6$\, is the second Bernoulli polynomial.
The power $S_{\scriptscriptstyle (m,n)}^{^{\scriptscriptstyle 132}}$ becomes
a function on $X(11)$ whose divisor has support in the set of cusps and can be explicitly given: the order
at a cusp represented by a pair $(m',n')$ is \,$726\,B_2\big((m\,m'+n\,n')/11\big)$, where one regards
the sum \,$m\,m'+n\,n'$ reduced mod $11$. The product of all Siegel funcionts $S_{\scriptscriptstyle (m,n)}$
is a constant in $\Q(\zeta)$. As a matter of fact, every product of Siegel functions whose
divisor has integer coefficients lies in the function field of $X(11)$ over $\Q(\zeta)$.
Conversely, every \emph{modular unit} on $X(11)$, that is, every non-constant modular function of level $11$
with cuspidal divisor can be explicitly written, up to a non-zero multiplicative constant,
as a product of Siegel functions.

\vspace{2truemm}

Finally, we need some more notation to proceed further. As in Section \ref{secXnsp}, $\pi$ \,and\, $\pi^+$ stand for the canonical projections
$$\pi\,\colon X(11) \longrightarrow X_{ns}(11),\qquad \qquad
\pi^+\,\colon X_{ns}(11) \longrightarrow X^+_{ns}(11).$$
For $k=1,\dots,5$, let $P_{\!\scriptscriptstyle k}$ denote the image by $\pi$ of the
cusp represented by the pair $(k,0)$. The cuspidal points on $X_{ns}(11)$ are then
$P_{\!\scriptscriptstyle 1},\dots,P_{\!\scriptscriptstyle 5},
w P_{\!\scriptscriptstyle 1},\dots,w P_{\!\scriptscriptstyle 5}$.
They all have ramification index~$11$ over $X(1)$, are defined over $\Q(\zeta)$ and are transitively
permuted by the Galois group of this extension over $\Q$.
The cuspidal points on $X^+_{ns}(11)$ are
\,$\pi^+(P_{\!\scriptscriptstyle 1}),\dots,\pi^+(P_{\!\scriptscriptstyle 5})$ and their
minimal field of definition is $\Q(\epsilon)$.

\vspace{2truemm}

The first step in our computations is an application of the results in \cite{Ku-Lang}.

\begin{lemma}\label{K-L}
There are exactly two normalized modular units $\widetilde X, \widetilde Y$\! on~$X_{ns}^+(11)$ having degree at most~$3$
and only a pole at \,$\pi^+(P_{\!\scriptscriptstyle 1})$.
There are exactly two normalized modular units $\widetilde U, \widetilde V$\! on~$X_{ns}(11)$ which
are not functions on $X_{ns}^+(11)$ and have only poles at $P_{\!\scriptscriptstyle 1}$
and~\,$w P_{\!\scriptscriptstyle 1}$ of order $5$.
Specifically, the functions \,$\widetilde X$, $\widetilde Y$, $\widetilde U$, $\widetilde V$ have divisors
$$\begin{array}{ccl}
\dv(\widetilde X) & = & \left(\pi^+(P_{\!\scriptscriptstyle 3})\right)
\,+\,\left(\pi^+(P_{\!\scriptscriptstyle 5})\right)
\,-\,2\left(\pi^+(P_{\!\scriptscriptstyle 1})\right),\\[10pt]
\dv(\widetilde Y) & = & \left(\pi^+(P_{\!\scriptscriptstyle 3})\right)
\,+\,2\left(\pi^+(P_{\!\scriptscriptstyle 2})\right)
\,-\,3\left(\pi^+(P_{\!\scriptscriptstyle 1})\right),\\[10pt]
\dv(\widetilde U) & = & 4\left(P_{\!\scriptscriptstyle 2}\right)\,+\,\left(P_{\!\scriptscriptstyle 4}\right)
\,+\,5\left(P_{\!\scriptscriptstyle 3}\right)
\,-\,5\left(P_{\!\scriptscriptstyle 1}\right)\,-\,5\left(w P_{\!\scriptscriptstyle 1}\right),\\[10pt]
\dv(\widetilde V) & = & 4\left(w P_{\!\scriptscriptstyle 2}\right)\,+\,\left(w P_{\!\scriptscriptstyle 4}\right)
\,+\,5\left(w P_{\!\scriptscriptstyle 3}\right)
\,-\,5\left(P_{\!\scriptscriptstyle 1}\right)\,-\,5\left(w P_{\!\scriptscriptstyle 1}\right),
\end{array}$$
and they are respectively given, up to the product by elements in $\Q(\epsilon)$, by
$$
\!\!\!g_{\scriptscriptstyle 5}\,h_{\scriptscriptstyle 5}
\,,\qquad\quad
\dfrac{\,g_{\scriptscriptstyle 5}\,h_{\scriptscriptstyle 5}\,}{\,g_{\scriptscriptstyle 3}\,h_{\scriptscriptstyle 3}\,}
\,,\qquad\quad
\dfrac{\,g_{\scriptscriptstyle 5}^2\,h_{\scriptscriptstyle 5}\,}{\,h_{\scriptscriptstyle 2}\,g_{\scriptscriptstyle 3}\,h_{\scriptscriptstyle 3}^2\,g_{\scriptscriptstyle 4}\,}
\,,\qquad\quad
\dfrac{\,g_{\scriptscriptstyle 5}\,h_{\scriptscriptstyle 5}^2\,}{\,g_{\scriptscriptstyle 2}\,g_{\scriptscriptstyle 3}^2\,h_{\scriptscriptstyle 3}\,h_{\scriptscriptstyle 4}\,}\,,
$$
where $g_{\scriptscriptstyle k}$ and\, $h_{\scriptscriptstyle k}$ stand for the products of Siegel functions \,$S_{\scriptscriptstyle (m,n)}$\, for~\,$(m,n)$ running over
a system of representatives for the set of cusps \,$\pi^{-1}(P_{\!\scriptscriptstyle k})$ and
\,$\pi^{-1}(w P_{\!\scriptscriptstyle k})$, respectively.
\end{lemma}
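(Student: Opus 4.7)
The plan is to exploit the parametrization recalled above: every modular unit on $X(11)$ is, up to a multiplicative constant in $\Q(\zeta)$, an integer product of Siegel functions $S_{(m,n)}$, and its order at a cusp $(m',n')$ is computed via the formula involving $B_2$. Such a unit descends to $X_{ns}(11)$ (respectively $X^+_{ns}(11)$) precisely when the exponents are constant on orbits of the non-split Cartan group (respectively its normalizer) acting on the cusps of $X(11)$. With $g_k$ and $h_k$ defined in the statement as products of $S_{(m,n)}$ over the orbits $\pi^{-1}(P_k)$ and $\pi^{-1}(wP_k)$, every modular unit on $X_{ns}(11)$ is thus, up to a multiplicative constant in $\Q(\zeta)$, a product $\prod_{k=1}^{5} g_k^{a_k}\,h_k^{b_k}$ with $a_k,b_k \in \Z$, and it descends further to $X^+_{ns}(11)$ exactly when $a_k=b_k$ for all $k$.

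First I would assemble the $10\times 10$ matrix of orders $\ord_Q(g_k)$ and $\ord_Q(h_k)$ at the cuspidal points $Q \in \{P_1,\ldots,P_5,\,wP_1,\ldots,wP_5\}$ of $X_{ns}(11)$. This is a finite calculation: for each orbit $\pi^{-1}(P_k)$ or $\pi^{-1}(wP_k)$, sum the order contributions at any cusp of $X(11)$ lying above $Q$ using the $B_2$ formula recalled above. Once this table is in hand, identifying a modular unit on $X_{ns}(11)$ or $X^+_{ns}(11)$ whose divisor is a prescribed $\Z$-linear combination of the $P_k$ and $wP_k$ reduces to a small linear system over $\Z$ in the exponents $a_k, b_k$. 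Direct verification then shows that the four displayed products of $g_k$ and $h_k$ yield precisely the four divisors listed.

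Uniqueness follows from a Riemann-Roch count combined with the normalization convention. On the genus-$1$ curve $X^+_{ns}(11)$ one has $\ell\bigl(n\,\pi^+(P_1)\bigr) = n$ for $n \geq 1$, so the constraint of having only a pole at $\pi^+(P_1)$ of order at most $3$ leaves a three-dimensional space of functions, inside which exactly one nonzero element (up to scalars) realizes each of the two divisors described for $\widetilde X$ and $\widetilde Y$; the fixed normalization of the $q_{\scriptscriptstyle\ast}$-expansion then pins down the scalar. On the genus-$4$ curve $X_{ns}(11)$ one has $\ell(5P_1+5wP_1)=7$ by Riemann-Roch, and the further requirement that the zero divisor be supported on the remaining cusps with the exact multiplicity profile $4,1,5$ either at $P_2,P_4,P_3$ or at $wP_2,wP_4,wP_3$ isolates exactly one candidate in each case, namely $\widetilde U$ and $\widetilde V$.

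The main obstacle is the integrality step: not every degree-zero divisor supported on cusps is the divisor of a product of Siegel functions, since the principal divisors of such products form only a sublattice of the full cuspidal divisor group. Hence one cannot simply invoke Riemann-Roch and conclude — one must actually exhibit the integer exponent vectors, which the lemma does explicitly, and then check the divisors directly through the $B_2$ formula.
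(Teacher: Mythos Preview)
Your strategy in the first two paragraphs is exactly what the paper intends: it presents the lemma as a direct application of the Kubert--Lang theory, and the mechanism you describe (build the order matrix from the $B_2$ formula, then solve an integer linear system in the exponents) is the right one. Verifying that the four displayed Siegel products have the stated divisors is then routine.

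The gap is in your third paragraph. Riemann--Roch gives $\ell\bigl(3\,\pi^+(P_1)\bigr)=3$ and $\ell(5P_1+5\,wP_1)=7$, but those spaces are dominated by functions with \emph{non-cuspidal} zeros, so these dimension counts say nothing about how many \emph{modular units} they contain. The claim ``exactly two'' is the assertion that, among all degree-zero cuspidal divisors with the prescribed polar part, precisely two (respectively, two that are not $w$-symmetric) are principal; this is a statement about the cuspidal divisor class group, not about $\ell(D)$. The correct argument is the forward direction of your own machinery: once the order matrix is in hand, enumerate all integer exponent vectors $(a_k)$ (respectively $(a_k,b_k)$) whose associated divisor has pole only at $\pi^+(P_1)$ of order at most $3$ (respectively polar part $5(P_1)+5(wP_1)$ with $a_k\neq b_k$ for some $k$), and observe that the search returns only the listed solutions. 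Your final paragraph already notes that principality, not mere degree, is the genuine constraint; that observation controls the ``exactly two'' count just as much as it controls existence, and Riemann--Roch plays no role there.
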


\vspace{2truemm}

As it can be checked from their Fourier $q_{\scriptscriptstyle\ast}$-expansions,
the functions in Lemma \ref{K-L} satisfy the relations
\begin{footnotesize}
\begin{equation*}\label{eq1}
\!\!\!\widetilde Y^2-2\,\epsilon\,\widetilde X\,\widetilde Y
+\left(2 + 6 \epsilon + 2 \epsilon^2 - 2 \epsilon^3 - \epsilon^4\right)\!\widetilde Y
= \widetilde X^3 - 2\left(3 \epsilon + 3 \epsilon^2 - \epsilon^3 - \epsilon^4\right)\!\widetilde X^2
+ \left(3 + 11 \epsilon + 5 \epsilon^2 - 4 \epsilon^3 - 2 \epsilon^4\right)\!\widetilde X
\end{equation*}
\end{footnotesize}
\!\!and
\begin{footnotesize}
\begin{equation}\label{eq2}
\widetilde U\,\widetilde V \,=\,
\widetilde Y^2\left(\widetilde X^2 \,-\, \left(1-3\epsilon+\epsilon^3\right)\widetilde Y
\,+\, \left(1-2\epsilon-3\epsilon^2+\epsilon^3+\epsilon^4\right)\widetilde X\right)\!.
\end{equation}
\end{footnotesize}

\vspace{1truemm}

\begin{prop}\label{prop1}
With the notation in \,{\rm Lemma \ref{K-L}}, consider the functions
\begin{small}
$$\begin{array}{ccl}
\widehat X & \!\!:=\!\! & 5 - 5 \epsilon - 7 \epsilon^2 + 2 \epsilon^3 +
 2 \epsilon^4 + \left(12 - 7 \epsilon - 12 \epsilon^2 + 3 \epsilon^3 +
    3 \epsilon^4\right)\!\widetilde X, \\[10pt]
\widehat Y & \!\!:=\!\! & \!-12 + 3 \epsilon + 10 \epsilon^2 - \epsilon^3 - 2 \epsilon^4
-\left(13 - 7 \epsilon - 20 \epsilon^2 + 5 \epsilon^3 + 6 \epsilon^4\right)\!\widetilde X
-\left(46 - 19 \epsilon - 57 \epsilon^2 + 7 \epsilon^3 + 13 \epsilon^4\right)\!\widetilde Y\!
\end{array} $$
\end{small}
\!and the constants
\begin{small}
$$
\begin{array}{ll}
\alpha_{\scriptscriptstyle 2}:=11 \epsilon + 6 \epsilon^2 - 4 \epsilon^3 - 2 \epsilon^4, \qquad &
\gamma_{\scriptscriptstyle 3}:=-4 - 26 \epsilon - 19 \epsilon^2 + 10 \epsilon^3 + 7 \epsilon^4, \\[5pt]

\alpha_{\scriptscriptstyle 1}:= 21 + 66 \epsilon + 25 \epsilon^2 - 24 \epsilon^3 - 12 \epsilon^4, \qquad &
\gamma_{\scriptscriptstyle 2}:= -200 - 722 \epsilon - 313 \epsilon^2 + 259 \epsilon^3 + 133 \epsilon^4, \\[5pt]

\alpha_{\scriptscriptstyle 0}:=17 - 103 \epsilon - 61 \epsilon^2 + 38 \epsilon^3 + 21 \epsilon^4, \qquad &
\gamma_{\scriptscriptstyle 1}:=\phantom{+}208 + 1062 \epsilon + 633 \epsilon^2 - 396 \epsilon^3 - 245 \epsilon^4,\\[5pt]

\,\delta_{\scriptscriptstyle 1}:= 77 + 242 \epsilon + 99 \epsilon^2 - 88 \epsilon^3 - 44 \epsilon^4, \qquad &
\gamma_{\scriptscriptstyle 0}:=\phantom{+}534 + 1499 \epsilon + 425 \epsilon^2 - 516 \epsilon^3 - 210 \epsilon^4,\\[5pt]

\,\delta_{\scriptscriptstyle 0}:=  187 + 517 \epsilon + 264 \epsilon^2 - 187 \epsilon^3 - 110 \epsilon^4, \qquad &
\, \beta\,:=-7 - 52 \epsilon - 38 \epsilon^2 + 20 \epsilon^3 + 14 \epsilon^4 .
\end{array}
$$
\end{small}
\!\!Then,
$$
X=\dfrac{\,\alpha_{\scriptscriptstyle 2}\widehat X^2+\alpha_{\scriptscriptstyle 1}\widehat X+\alpha_{\scriptscriptstyle 0}
+\beta\,\widehat Y\,}
{\,\big(\widehat X - 11 \epsilon - 6 \epsilon^2 + 4 \epsilon^3 +2 \epsilon^4\big)^{\!2}\,} \qquad
\mathrm{and} \qquad
Y=\dfrac{\,\gamma_{\scriptscriptstyle 3}\widehat X^3+\gamma_{\scriptscriptstyle 2}\widehat X^2+
\gamma_{\scriptscriptstyle 1}\widehat X+\gamma_{\scriptscriptstyle 0}+
\big(\delta_{\scriptscriptstyle 1}\widehat X+\delta_{\scriptscriptstyle 0}\big)\widehat Y\,}
{\big(\widehat X - 11 \epsilon - 6 \epsilon^2 + 4 \epsilon^3 +2 \epsilon^4\big)^{\!3}}
$$
are the generators of the function field \,$\Q\big(X^+_{ns}(11)\big)$ satisfying the relations {\rm (\ref{xns+})} and
{\rm (\ref{j})}.
Their values at the cusps are
$$
\begin{array}{ll}
X(P_{\!\scriptscriptstyle 1})= \phantom{+}11 \epsilon + 6 \epsilon^2 - 4 \epsilon^3 - 2 \epsilon^4, & \quad\qquad
Y(P_{\!\scriptscriptstyle 1})= -4 - 26 \epsilon - 19 \epsilon^2 + 10 \epsilon^3 + 7 \epsilon^4,\\[5pt]
X(P_{\!\scriptscriptstyle 2})= \phantom{+}10 - 3 \epsilon - 10 \epsilon^2 + \epsilon^3 + 2 \epsilon^4, & \quad\qquad
Y(P_{\!\scriptscriptstyle 2})= -29 + 13 \epsilon + 37 \epsilon^2 - 5 \epsilon^3 - 9 \epsilon^4,\\[5pt]
X(P_{\!\scriptscriptstyle 3})= -3 - 4 \epsilon + 11 \epsilon^2 + \epsilon^3 - 3 \epsilon^4,
& \quad\qquad
Y(P_{\!\scriptscriptstyle 3})= \phantom{+}2 + 17 \epsilon - 21 \epsilon^2 - 4 \epsilon^3 + 6 \epsilon^4,\\[5pt]
X(P_{\!\scriptscriptstyle 4})= \phantom{+}2 - 2 \epsilon - 5 \epsilon^2 + 2 \epsilon^3 + 2 \epsilon^4, & \quad\qquad
Y(P_{\!\scriptscriptstyle 4})= -\epsilon - 2 \epsilon^2 - 3 \epsilon^3 - \epsilon^4,\\[5pt]
X(P_{\!\scriptscriptstyle 5})= -2 \epsilon - 2 \epsilon^2 + \epsilon^4, & \quad\qquad
Y(P_{\!\scriptscriptstyle 5})= \phantom{+}1 - 3 \epsilon + 5 \epsilon^2 + 2 \epsilon^3 - 3 \epsilon^4,
\end{array}
$$
where \,$X(Q)$, $Y(Q)$ is an abuse of notation for \,$X\!\left(\pi^+(Q)\right)$, $Y\!\left(\pi^+(Q)\right)$,
respectively. The first coefficients of their Fourier $q_{\scriptscriptstyle\ast}$-expansions are as follows{\rm :}
$$\begin{array}{c||r|r}
q_{\scriptscriptstyle\ast}^n & X\qquad\qquad & Y\qquad\qquad\qquad  \\ \hline\hline
& & \\[-10pt]
\,1\,\, &
11 \epsilon + 6 \epsilon^2 - 4 \epsilon^3 -   2 \epsilon^4 &-4 - 26 \epsilon - 19 \epsilon^2 + 10 \epsilon^3+7 \epsilon^4
\\[4pt]
\,\,q_{\scriptscriptstyle\ast}\,\, & 12 + 54 \epsilon + 22 \epsilon^2 - 19 \epsilon^3 - 9 \epsilon^4   &-62 - 239 \epsilon - 115 \epsilon^2 + 86 \epsilon^3 + 47 \epsilon^4  \\[4pt]
q_{\scriptscriptstyle\ast}^2 & 58 + 237 \epsilon +    113 \epsilon^2 - 86 \epsilon^3 - 46 \epsilon^4  &-352 -    1463 \epsilon - 704 \epsilon^2 + 528 \epsilon^3 + 286 \epsilon^4   \\[4pt]
q_{\scriptscriptstyle\ast}^3 & 232 + 968 \epsilon + 477 \epsilon^2 -   351 \epsilon^3 - 192 \epsilon^4 &-1763 - 7441 \epsilon -    3640 \epsilon^2 + 2691 \epsilon^3 + 1469 \epsilon^4\\[4pt]
q_{\scriptscriptstyle\ast}^4 & 889 + 3760 \epsilon + 1854 \epsilon^2 - 1362 \epsilon^3 -   746 \epsilon^4  &-8158 - 34317 \epsilon - 16835 \epsilon^2 + 12417 \epsilon^3 + 6790 \epsilon^4
\end{array}
$$
\end{prop}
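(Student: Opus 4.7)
The plan is to work downstream from the modular units of Lemma \ref{K-L}: build a planar model for $X_{ns}^+(11)$ out of $\widetilde X, \widetilde Y$, and then change variables first linearly and then rationally to land on the prescribed Weierstrass equation (\ref{xns+}) and $j$-formula (\ref{j}).

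First I would note that, by Lemma \ref{K-L}, $\widetilde X$ and $\widetilde Y$ are functions of orders $2$ and $3$ respectively with their unique poles at $\pi^+(P_{\!\scriptscriptstyle 1})$. A dimension count on the Riemann--Roch space $L\!\left(6\,\pi^+(P_{\!\scriptscriptstyle 1})\right)$, which has dimension $6$ since the genus is $1$, forces a linear dependency among the seven functions $1,\widetilde X,\widetilde X^2,\widetilde X^3,\widetilde Y,\widetilde X\,\widetilde Y,\widetilde Y^2$. Matching low-order terms of the Fourier $q_{\scriptscriptstyle\ast}$-expansions of the Siegel products on both sides of a generic cubic ansatz pins down its coefficients and yields the relation displayed just before (\ref{eq2}); this certifies that $(\widetilde X,\widetilde Y)$ realises $X_{ns}^+(11)$ as a Weierstrass-type plane cubic over $\Q(\epsilon)$ with point at infinity $\pi^+(P_{\!\scriptscriptstyle 1})$.

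Next I would check, by direct substitution into that cubic, that the linear change $(\widetilde X,\widetilde Y)\mapsto(\widehat X,\widehat Y)$ transforms it into a new Weierstrass cubic (the ``hat cubic'') with $\pi^+(P_{\!\scriptscriptstyle 1})$ still at infinity, and that the subsequent rational change $(\widehat X,\widehat Y)\mapsto(X,Y)$ transforms the hat cubic into (\ref{xns+}). Geometrically, this second change is a translation on the elliptic curve: the factors $(\widehat X-c)^2$ and $(\widehat X-c)^3$ with $c=11\epsilon+6\epsilon^2-4\epsilon^3-2\epsilon^4$ in the denominators reveal that the translating point $O'$ satisfies $\widehat X(O')=c$, and the numerators are precisely the polynomials that cancel the ensuing pole at $O'$ while leaving a pole of orders $2$ and $3$ at $-O'$ (the new point at infinity). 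The verification reduces to the algebraic identity obtained by substituting the explicit rational expressions for $X,Y$ into (\ref{xns+}) and reducing modulo the hat cubic. The $\Q$-rationality of $X,Y$ as functions on $X_{ns}^+(11)$ then follows from the fact that they satisfy a $\Q$-equation and generate a transcendence-degree-$1$ extension.

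The cusp values and initial Fourier coefficients follow by direct computation: the Fourier $q_{\scriptscriptstyle\ast}$-expansion of each Siegel function $S_{\scriptscriptstyle(m,n)}$ is completely explicit, so the expansions and cusp values of $\widetilde X,\widetilde Y$ are computed at once and then pushed through the two changes of variables to give the table and the listed values $X(P_{\!\scriptscriptstyle k}),Y(P_{\!\scriptscriptstyle k})$. For the identity (\ref{j}), I would compare the $q_{\scriptscriptstyle\ast}$-expansions of both sides: the right-hand side is a rational function of $X,Y$ whose expansion is obtained from the table, while the left-hand side is the classical $q$-expansion of the $j$-invariant with $q=q_{\scriptscriptstyle\ast}^{11}$, justified by the ramification index $11$ of the canonical map to $X(1)$ at $\pi^+(P_{\!\scriptscriptstyle 1})$; agreement to an order exceeding the degree of the rational function on the right forces the identity. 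The main obstacle throughout is not the verification, which is mechanical polynomial arithmetic, but the prior discovery of the constants $\alpha_i,\beta,\gamma_i,\delta_i\in\Z[\epsilon]$: they arise from inverting a translation on the elliptic curve defined over $\Q(\epsilon)$ and demanding that the output descend to $\Q$, i.e.\ be $\Gal(\Q(\epsilon)/\Q)$-invariant after clearing denominators, both of which require computer algebra support.
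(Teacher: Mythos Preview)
Your verification-based approach is valid but differs substantively from the paper's argument. The paper does not verify the rational formulas for $X,Y$ by substitution into (\ref{xns+}); instead it first checks that $(\widehat X,\widehat Y)$ already satisfy the \emph{same} Weierstrass equation (\ref{xns+}) --- so the ``hat cubic'' \emph{is} (\ref{xns+}) --- and then observes that $(X,Y)$ and $(\widehat X,\widehat Y)$ must differ by a translation on the elliptic curve, possibly composed with the elliptic involution. The paper then \emph{determines} this translation rather than verifying given formulas: it computes $\widehat X(P_k),\widehat Y(P_k)$ algebraically from the divisors in Lemma~\ref{K-L} together with the relation (\ref{eq2}) (which you never invoke), while the values $X(P_k),Y(P_k)$ are extracted from the $j$-expression (\ref{j}) as a $\Gal(\Q(\epsilon)/\Q)$-orbit of cuspidal points; matching the two lists singles out both the translating point $\big(X(P_1),Y(P_1)\big)$ and the correct sign. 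In short, the paper uses (\ref{j}) as \emph{input} to discover the change of variables, whereas you treat it as a final \emph{check} via $q_{\scriptscriptstyle\ast}$-expansions. Your route is more mechanical but equally sound, provided enough Fourier coefficients are computed; the paper's route explains where the constants come from and also furnishes the value $\widehat X(P_4),\widehat Y(P_4)$, which cannot be read off from the divisors of $\widetilde X,\widetilde Y$ alone.

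One genuine correction: your sentence ``The $\Q$-rationality of $X,Y$ \ldots follows from the fact that they satisfy a $\Q$-equation and generate a transcendence-degree-$1$ extension'' is not a valid argument. Any translate of a $\Q$-rational Weierstrass pair by a point not defined over $\Q$ still satisfies (\ref{xns+}) but lies only in $\Q(\epsilon)\big(X_{ns}^+(11)\big)$. What actually forces $X,Y\in\Q\big(X_{ns}^+(11)\big)$ is the conjunction of (\ref{xns+}) and (\ref{j}): the cover $X_{ns}^+(11)\to X(1)$ has no nontrivial automorphisms, so the pair satisfying both relations is unique, and since a $\Q$-rational solution exists by the results of Ligozat and Halberstadt cited in Section~\ref{secXnsp}, your $(X,Y)$ must coincide with it. Thus your later verification of (\ref{j}) does supply $\Q$-rationality, but the reason you give earlier does not.
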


\vspace{1truemm}

\begin{proof}
From the Weierstrass equation given in (\ref{eq2}), $\widehat X$ and $\widehat Y$ can be checked to satisfy
$$\widehat Y^2\,+\,\widehat Y\,=\,\widehat X^3\,-\,\widehat X^2\,-7\,\widehat X\,+\,10.$$
They are functions in \,$\Q(\zeta)\!\big(X^+_{ns}(11)\big)$
with a pole at the cusp \,$\pi^+(P_{\scriptscriptstyle 1})$ of order $2$ and $3$, respectively.
This means that their relation with $X$ and $Y$ must be given, up to the elliptic involution,
by translation by $\pi^+(P_{\scriptscriptstyle 1})$, namely,
$$
(X,\,Y)\,=\,(\widehat X,\,\widehat Y) \,+\,
\left(X(P_{\!\scriptscriptstyle 1}),\,Y(P_{\!\scriptscriptstyle 1})\right)
\qquad
\mathrm{or}
\qquad
(X,\,Y)\,=\,(\widehat X,\,-1-\widehat Y) \,+\,
\left(X(P_{\!\scriptscriptstyle 1}),\,Y(P_{\!\scriptscriptstyle 1})\right),
$$
where the addition sign stands for the group law in the Weierstrass equation (\ref{xns+}).
The exact relation turns out to be of the second type and holds for
$$X(P_{\!\scriptscriptstyle 1})=11 \epsilon + 6 \epsilon^2 - 4 \epsilon^3 - 2 \epsilon^4, \qquad\quad\qquad
Y(P_{\!\scriptscriptstyle 1})=-4 - 26 \epsilon - 19 \epsilon^2 + 10 \epsilon^3 + 7 \epsilon^4.$$
Indeed, it is uniquely determined from the values that the functions $X$, $Y$, $\widehat X$, $\widehat Y$ take at
the other cuspidal points. On the one hand, each of these points is a zero of
either $\widetilde  X$, $\widetilde Y$ or \,$\widetilde U\,\widetilde V$\!,
\,so that the relations in (\ref{eq2}) lead to the following values:
$$
\begin{array}{ll}
\widehat X(P_{\!\scriptscriptstyle 2})= -2 + \epsilon + 5 \epsilon^2 - \epsilon^4, & \quad\qquad
\widehat Y(P_{\!\scriptscriptstyle 2})= -4 + 2 \epsilon + 5 \epsilon^2 - 2 \epsilon^3 - 2 \epsilon^4,\\[5pt]
\widehat X(P_{\!\scriptscriptstyle 3})= \phantom{+}5 - 5 \epsilon - 7 \epsilon^2 + 2 \epsilon^3 + 2 \epsilon^4,
& \quad\qquad
\widehat Y(P_{\!\scriptscriptstyle 3})= -12 + 3 \epsilon + 10 \epsilon^2 - \epsilon^3 - 2 \epsilon^4,\\[5pt]
\widehat X(P_{\!\scriptscriptstyle 4})= \phantom{+}1 - 2 \epsilon + 3 \epsilon^2 - \epsilon^4, & \quad\qquad
\widehat Y(P_{\!\scriptscriptstyle 4})= -2 - 4 \epsilon + 11 \epsilon^2 + \epsilon^3 - 3 \epsilon^4,\\[5pt]
\widehat X(P_{\!\scriptscriptstyle 5})= \phantom{+}5 - 5 \epsilon - 7 \epsilon^2 + 2 \epsilon^3 + 2 \epsilon^4, & \quad\qquad
\widehat Y(P_{\!\scriptscriptstyle 5})= \phantom{+}11 - 3 \epsilon - 10 \epsilon^2 + \epsilon^3 + 2 \epsilon^4.
\end{array}
$$
On the other hand, the cusps on $X_{ns}^+(11)$ have ramification index $11$ over $X(1)$ and are transitively
permuted by the Galois group of $\Q(\epsilon)$ over $\Q$. Thus, the expression (\ref{j}) yields
$$
X(P_{\!\scriptscriptstyle k})=-2 z - 2 z^2 + z^4, \quad\qquad\qquad\quad
Y(P_{\!\scriptscriptstyle k})=1 - 3 z + 5 z^2 + 2 z^3 - 3 z^4,
$$
for $z$ lying in the set
$$\left\{\epsilon,\,\,-2+\epsilon^2,\,\,2-4\epsilon^2+\epsilon^4,\,\,-3\epsilon+\epsilon^3,
\,\,-1+2\epsilon+3\epsilon^2-\epsilon^3-\epsilon^4\right\}$$
of Galois conjugates of $\epsilon$. The way in which $z$ runs over this set for \,$k=1,\dots,5$\,
is the only one making the values
$\widehat X(P_{\!\scriptscriptstyle k}), \widehat Y(P_{\!\scriptscriptstyle k})$
match with  the values $X(P_{\!\scriptscriptstyle k}), Y(P_{\!\scriptscriptstyle k})$ through one of the two possible relations above.
\end{proof}

\vspace{1truemm}

\begin{rem}\label{reciproc}
{\rm The relations
$$
(X,\,Y)\,=\,(\widehat X,\,-1-\widehat Y) \,+\,
\left(X(P_{\!\scriptscriptstyle 1}),\,Y(P_{\!\scriptscriptstyle 1})\right)
\qquad
\mathrm{and}
\qquad
(\widehat X,\,\widehat Y)\,=\,(X,\,-1-Y) \,+\,
\left(X(P_{\!\scriptscriptstyle 1}),\,Y(P_{\!\scriptscriptstyle 1})\right)
$$
are equivalent. This means that the expressions for $X$ and $Y$ as rational functions on $\widehat X$, $\widehat Y$
are the same as the expressions for $\widehat X$ and $\widehat Y$\!, respectively, as rational functions on $X$, $Y$\!.\,
These expressions can thus be seen as an involution on $X^+_{ns}(11)$ defined over $\Q(\epsilon)$.
}\end{rem}

\vspace{1truemm}

\begin{prop}\label{prop2} With the notation in \,{\rm Lemma \ref{K-L}} and \,{\rm Proposition \ref{prop1}}, consider
the function
$$\widehat T  \,:=\,  \widetilde U \,+\, \dfrac{1}{\,11\,}\left(8 + 17 \epsilon - 21 \epsilon^2 - 4 \epsilon^3
+ 6 \epsilon^4\right)\widetilde V$$
and let \,$\overline T$ be the trace of
\,\,$\big(X - 11 \epsilon - 6 \epsilon^2 + 4 \epsilon^3 + 2 \epsilon^4\big)^{\!5}\,\,\widehat T\,$
from the function field \,$\Q(\zeta)\!\big(X_{ns}(11)\big)$ to the subextension
\,$\Q\!\left(\sqrt{-11}\,\right)\!\!\big(X_{ns}(11)\big)$.
Then, the function
$$T \,:=\, \dfrac{\,\overline T\,}{\,\sqrt{-11}\,
\big(1458X^3 + 12640X^2 - 56666X + 59141 - (107X^2 + 7296X - 5508)Y\big)\,}$$
generates \,$\Q\big(X_{ns}(11)\big)$ over \,$\Q\big(X^+_{ns}(11)\big)$ and satisfies the equation\, {\rm (\ref{xns})}
along with $X$\!.
The first coefficients of its Fourier $q_{\scriptscriptstyle\ast}$-expansion are as follows{\rm :}
$$\begin{array}{c||r}
 q_{\scriptscriptstyle\ast}^n &  T/\sqrt{-11} \qquad\qquad\qquad\\ \hline\hline
 & \\[-10pt]
 \,1\,\, & \ -5 -17\epsilon -10\epsilon^2 + 6\epsilon^3 +4\epsilon^4 \ \\[4pt]
  \,\,q_{\scriptscriptstyle\ast}\,\, & \ -34 -152\epsilon -78\epsilon^2 +55\epsilon^3 +31\epsilon^4 \ \\[4pt]
   q_{\scriptscriptstyle\ast}^2 & \ -214 -903\epsilon -441\epsilon^2 +326\epsilon^3 + 178\epsilon^4 \ \\[4pt]
   q_{\scriptscriptstyle\ast}^3 & \ -1088 -4550\epsilon -2215\epsilon^2 + 1645\epsilon^3 + 896\epsilon^4 \
\end{array}
$$
\end{prop}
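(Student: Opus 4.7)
The plan is to produce the function $T$ through a Galois-averaging construction on a carefully chosen $\Q(\zeta)$-linear combination of the modular units $\widetilde U$, $\widetilde V$, and then verify the identity (\ref{xns}) by matching $q_{\scriptscriptstyle\ast}$-expansions. The main inputs are the divisors supplied by Lemma \ref{K-L}, the $\Q$-rationality of $X$ and $Y$ from Proposition \ref{prop1}, and the fact that $\Q(\sqrt{-11})$ is the unique intermediate quadratic subfield of $\Q(\zeta)$.

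First I would compute the divisors on $X_{ns}(11)$ of $\widehat T$ and of the auxiliary function $F := \bigl(X - X(P_{\!\scriptscriptstyle 1})\bigr)^{\!5}\,\widehat T$ over $\Q(\zeta)$. By Lemma \ref{K-L}, both $\widetilde U$ and $\widetilde V$ have pole divisor $5(P_{\!\scriptscriptstyle 1}) + 5(wP_{\!\scriptscriptstyle 1})$, so the same holds for any non-trivial combination $\widehat T = \widetilde U + c\,\widetilde V$. Pulled back from $X^+_{ns}(11)$, the function $X - X(P_{\!\scriptscriptstyle 1})$ has double poles at the two preimages $O_1, O_2$ of the origin $O$ of (\ref{xns+}) and simple zeros at $P_{\!\scriptscriptstyle 1}$, $wP_{\!\scriptscriptstyle 1}$ and at the two preimages of $-\pi^+(P_{\!\scriptscriptstyle 1})$; hence $F$ is regular outside $\{O_1,O_2\}$, with pole divisor there of total degree at most $20$.

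Next I would exploit the Galois action of $G := \Gal(\Q(\zeta)/\Q)$, which is cyclic of order $10$, permutes the ten cusps transitively, and fixes $X$ and $Y$. Its unique subgroup of index two is $H := \Gal(\Q(\zeta)/\Q(\sqrt{-11}))$, so the trace $\overline T = \sum_{\sigma \in H}\sigma(F)$ is automatically $\Q(\sqrt{-11})$-rational and inherits the pole bound at $\{O_1,O_2\}$. The denominator $D := \sqrt{-11}\bigl(1458X^3 + 12640X^2 - 56666X + 59141 - (107X^2 + 7296X - 5508)Y\bigr)$ visibly belongs to $\Q(\sqrt{-11})\!\bigl(X^+_{ns}(11)\bigr)$ and is negated by the non-trivial element $\tau$ of $\Gal(\Q(\sqrt{-11})/\Q)$; thus the $\Q$-rationality of $T = \overline T/D$ reduces to the single identity $\tau(\overline T) = -\overline T$, equivalently $\Tr_G(F) = 0$. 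I would verify this vanishing by truncating $q_{\scriptscriptstyle\ast}$-expansions to a degree exceeding the dimension of the Riemann--Roch space in which $\Tr_G(F)$ lies.

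Finally, I would confirm (\ref{xns}) by matching Fourier coefficients of $T^2$ and $-(4X^3 + 7X^2 - 6X + 19)$: both are functions in $\Q\bigl(X^+_{ns}(11)\bigr)$ of effectively bounded pole order at $\{O_1,O_2\}$, so agreement up to sufficiently high order in $q_{\scriptscriptstyle\ast}$ forces equality, and the tabulated initial coefficients of $T/\sqrt{-11}$ come out by routine substitution. That $T$ generates $\Q\bigl(X_{ns}(11)\bigr)/\Q\bigl(X^+_{ns}(11)\bigr)$ is then automatic, since $T\neq 0$ and satisfies a quadratic over $\Q\bigl(X^+_{ns}(11)\bigr)$ whose right-hand side is a non-square. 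The genuinely delicate step is the reverse engineering of the multiplier $\tfrac{1}{11}(8+17\epsilon-21\epsilon^2-4\epsilon^3+6\epsilon^4)$ of $\widetilde V$ and of the cubic inside $D$: these are uniquely pinned down by the requirement $\Tr_G(F)=0$ combined with the specific Weierstrass structure of $X^+_{ns}(11)$, and isolating them in practice amounts to solving a linear system on truncated $q_{\scriptscriptstyle\ast}$-expansions, with the emergence of $\sqrt{-11}$ reflecting the intermediate quadratic subfield of $\Q(\zeta)$.
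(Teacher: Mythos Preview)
Your outline misses the role of the modular involution $w$, which is the pivot of the paper's argument and explains both constants you call ``reverse-engineered.'' Since $\widetilde V$ and $\widetilde U\,w$ have the same divisor (Lemma~\ref{K-L}), there is a unique $a\in\Q(\epsilon)^*$ with $\widetilde U\,w = a\,\widetilde V$; the combination $\widetilde U + a\widetilde V$ is then $w$-invariant, hence a polynomial in $\widetilde X,\widetilde Y$, and matching a few $q_{\scriptscriptstyle\ast}$-coefficients yields $11a = -(8+17\epsilon-21\epsilon^2-4\epsilon^3+6\epsilon^4)$. The multiplier in $\widehat T = \widetilde U - a\widetilde V$ is thus chosen precisely so that $\widehat T\,w = -\widehat T$. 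This anti-invariance forces $\widehat T^{\,2}$ --- and, since $w$ is defined over $\Q$ and commutes with the Galois action, also $\overline T^{\,2}$ --- to lie in the function field of $X^+_{ns}(11)$. One then verifies by $q_{\scriptscriptstyle\ast}$-expansion the identity
\[
\overline T^{\,2} \;=\; 11\,(4X^3+7X^2-6X+19)\bigl(1458X^3+12640X^2-56666X+59141-(107X^2+7296X-5508)Y\bigr)^{2},
\]
from which (\ref{xns}) follows algebraically; the polynomial appearing in $D$ is read off from this computation rather than guessed in advance. Your condition $\Tr_G(F)=0$ is a \emph{consequence} of all this (it is equivalent to ${}^\varsigma\overline T=-\overline T$, which the paper deduces from the shape of $\overline T^{\,2}$ together with cusp values), but you have not argued that it determines the multiplier on its own.

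The concrete gap in your final step is the assertion that $T^2$ lies in $\Q\bigl(X^+_{ns}(11)\bigr)$ with poles only at $\{O_1,O_2\}$: this is exactly the content of (\ref{xns}), so invoking it to bound the $q_{\scriptscriptstyle\ast}$-check is circular. Since $T=\overline T/D$ and the degree-$14$ function $D$ has zeros away from $\{O_1,O_2\}$, you have no a priori control on the poles of $T$ at those zeros. The fix is to run the verification on $\overline T^{\,2}$ rather than on $T^2$: both sides of the displayed identity above then have poles only at $\{O_1,O_2\}$ by your own divisor analysis of $F$, so a finite $q_{\scriptscriptstyle\ast}$-comparison is rigorous there.
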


\vspace{1truemm}

\begin{proof}
The functions $\widetilde V$ and \,$\widetilde Uw$\, have the same divisor, so
\,$\widetilde Uw=a\,\widetilde V$\, for some constant \,$a$. This constant can be computed
using that \,$\widetilde U + a\,\widetilde V$\!, which is a function in \,$\Q(\zeta)\!\big(X^+_{ns}(11)\big)$
with only a pole of order $5$ at the cusp \,$\pi^+(P_{\scriptscriptstyle 1})$,
must be a polynomial in $\widetilde X$ and $\widetilde Y$\!.
Specifically, from the Fourier $q_{\scriptscriptstyle\ast}$-expansions of these functions one obtains
\begin{align}\label{uvxy}
\widetilde U + a\,\widetilde V \,=\, b_{\scriptscriptstyle 2}\widetilde X^2\,+\,b_{\scriptscriptstyle 1}\widetilde X
\,+\,b_{\scriptscriptstyle 0} \,+\, (c_{\scriptscriptstyle 1}\widetilde X\,+\,c_{\scriptscriptstyle 0})\widetilde Y
\end{align}
for
\begin{small}
$$\begin{array}{ll}
\ 11\,a\,=-8-17\epsilon+21\epsilon^2+4\epsilon^3-6\epsilon^4, \qquad &
\quad 11\,b_{\scriptscriptstyle 2}=-21+9\epsilon-15\epsilon^2+5\epsilon^3+9\epsilon^4,\\[5pt]
\ 11\,c_{\scriptscriptstyle 1}=\phantom{+}3-17\epsilon+21\epsilon^2+4\epsilon^3-6\epsilon^4, \qquad &
\quad 11\,b_{\scriptscriptstyle 1}=-13+4\epsilon+52\epsilon^2-10\epsilon^3-18\epsilon^4,\\[5pt]
\ 11\,c_{\scriptscriptstyle 0}=-25-71\epsilon+45\epsilon^2+29\epsilon^3-5\epsilon^4, \qquad &
\quad 11\,b_{\scriptscriptstyle 0}=\phantom{+}17+76\epsilon+31\epsilon^2-25\epsilon^3-12\epsilon^4.\\[3pt]
\end{array}$$
\end{small}
\!\!\!The function \,$\widehat T\!=\!\widetilde U-a\,\widetilde V$ keeps the same polar part as $\widetilde U$,
\!$\widetilde V$ and satisfies \,$\widehat Tw=-\widehat T$\!,
so its square must also be a polynomial in $\widetilde X, \widetilde Y$\, hence in $\widehat X, \widehat Y$\!.
The relations given in Proposition~\ref{prop1} and Remark \ref{reciproc} between $\widehat X,\widehat Y$ and $X,Y$
lead to an identity of the form
\begin{small}
\begin{equation}\label{almost_equation}
\left(\big(X - 11 \epsilon - 6 \epsilon^2 + 4 \epsilon^3 + 2 \epsilon^4\big)^{\!5}\,\,\widehat T\,\right)^{\!2}=\,
\dfrac{1}{\,11\,}\left(4X^3+7X^2-6X+19\right)\!\big(f_1(X) \,+\, f_2 (X)\,Y\big)^2
\end{equation}
\end{small}
\!\!for certain polynomials \,$f_1(x), f_2(x)$ in $\Z[\epsilon][x]$
of respective degrees $3$ and $2$ (see Remark \ref{norm}).

\vspace{2truemm}

Let us then take the trace \,$\overline T$\, of
\,$\big(X - 11 \epsilon - 6 \epsilon^2 + 4 \epsilon^3 + 2 \epsilon^4\big)^{\!5}\,\,\widehat T\,$ to
\,$\Q\!\left(\sqrt{-11}\,\right)\!\!\big(X_{ns}(11)\big)$. Since~$X$ is defined over $\Q$, it suffices
to compute the conjugate functions \,${^{\sigma^i}}\widetilde U$ and \,${^{\sigma^i}}\widetilde V$\!
\,for $i=1,\dots,4$, where~$\sigma$ is the automorphism of the Galois extension
\,$\Q(\zeta)/\Q\!\left(\sqrt{-11}\,\right)$ sending $\zeta$ to~$\zeta^9$ \,hence \,$\epsilon$\, to \,$\epsilon^2-2$.
\,To that end, one can first derive \,${^{\sigma^i}\!}\widetilde X$\, and \,${^{\sigma^i}}\widetilde Y$\,
as rational functions on $X$ and $Y$ with coefficients in $\Q(\epsilon)$. This is a straightforward application
of Proposition \ref{prop1} and Remark \ref{reciproc}.

\vspace{2truemm}

Note that the values at the cusps given in Proposition \ref{prop1} imply\\[-6pt]
\begin{small}
$$\pi^+({^\sigma\!}P_{\scriptscriptstyle 1})=\pi^+(P_{\scriptscriptstyle 4}), \ \
\pi^+({^\sigma\!}P_{\scriptscriptstyle 4})=\pi^+(P_{\scriptscriptstyle 5}), \ \
\pi^+({^\sigma\!}P_{\scriptscriptstyle 5})=\pi^+(P_{\scriptscriptstyle 2}), \ \
\pi^+({^\sigma\!}P_{\scriptscriptstyle 2})=\pi^+(P_{\scriptscriptstyle 3}), \ \
\pi^+({^\sigma\!}P_{\scriptscriptstyle 3})=\pi^+(P_{\scriptscriptstyle 1}).
$$
\end{small}

\vspace{-5truemm}

\noindent Of course, this agrees with the formula (3.6.1) in \cite{Lig}, and the same permutation
can be checked to hold for the cusps $P_{\scriptscriptstyle 1},\dots,P_{\scriptscriptstyle 5}$.
Then, the information on Siegel functions leading to Lemma \ref{K-L} yields these expressions:
\begin{small}
$${^{\sigma}}\widetilde U = \nu_{\scriptscriptstyle 1}\dfrac{\,g_{\scriptscriptstyle 4}^2\,h_{\scriptscriptstyle 4}\,}{\,h_{\scriptscriptstyle 5}\,g_{\scriptscriptstyle 2}\,h_{\scriptscriptstyle 2}^2\,g_{\scriptscriptstyle 1}\,}\,,\,
\qquad
{^{\sigma^2}}\widetilde U = \nu_{\scriptscriptstyle 2}\dfrac{\,g_{\scriptscriptstyle 1}^2\,h_{\scriptscriptstyle 1}\,}{\,h_{\scriptscriptstyle 4}\,g_{\scriptscriptstyle 5}\,h_{\scriptscriptstyle 5}^2\,g_{\scriptscriptstyle 3}\,}\,,\,
\qquad
{^{\sigma^3}}\widetilde U = \nu_{\scriptscriptstyle 3}\dfrac{\,g_{\scriptscriptstyle 3}^2\,h_{\scriptscriptstyle 3}\,}{\,h_{\scriptscriptstyle 1}\,g_{\scriptscriptstyle 4}\,h_{\scriptscriptstyle 4}^2\,g_{\scriptscriptstyle 2}\,}\,,\,
\qquad
{^{\sigma^4}}\widetilde U = \nu_{\scriptscriptstyle 4}\dfrac{\,g_{\scriptscriptstyle 2}^2\,h_{\scriptscriptstyle 2}\,}{\,h_{\scriptscriptstyle 3}\,g_{\scriptscriptstyle 1}\,h_{\scriptscriptstyle 1}^2\,g_{\scriptscriptstyle 5}\,}\,,
$$
$$
{^{\sigma}}\widetilde V = \vartheta_{\scriptscriptstyle 1}\dfrac{\,g_{\scriptscriptstyle 4}\,h_{\scriptscriptstyle 4}^2\,}{\,g_{\scriptscriptstyle 5}\,g_{\scriptscriptstyle 2}^2\,h_{\scriptscriptstyle 2}\,h_{\scriptscriptstyle 1}\,}\,,
\qquad
{^{\sigma^2}}\widetilde V = \vartheta_{\scriptscriptstyle 2}\dfrac{\,g_{\scriptscriptstyle 1}\,h_{\scriptscriptstyle 1}^2\,}{\,g_{\scriptscriptstyle 4}\,g_{\scriptscriptstyle 5}^2\,h_{\scriptscriptstyle 5}\,h_{\scriptscriptstyle 3}\,}\,,
\qquad
{^{\sigma^3}}\widetilde V = \vartheta_{\scriptscriptstyle 3}\dfrac{\,g_{\scriptscriptstyle 3}\,h_{\scriptscriptstyle 3}^2\,}{\,g_{\scriptscriptstyle 1}\,g_{\scriptscriptstyle 4}^2\,h_{\scriptscriptstyle 4}\,h_{\scriptscriptstyle 2}\,}\,,
\qquad
{^{\sigma^4}}\widetilde V = \vartheta_{\scriptscriptstyle 4}\dfrac{\,g_{\scriptscriptstyle 2}\,h_{\scriptscriptstyle 2}^2\,}{\,g_{\scriptscriptstyle 3}\,g_{\scriptscriptstyle 1}^2\,h_{\scriptscriptstyle 1}\,h_{\scriptscriptstyle 5}\,}\,,\\[5pt]
$$
\end{small}
\!\!where \,$\nu_{\scriptscriptstyle i}$ and \,$\vartheta_{\scriptscriptstyle i}$
are constants which can be determined from the conjugate functions
\,${^{\sigma^i}\!}\widetilde X, {^{\sigma^i}}\widetilde Y$ and the relation (\ref{uvxy}).
They are displayed in the following table:\\[-2pt]
$$\begin{array}{c||c|c}
\ i \ & \ \nu_{\scriptscriptstyle i} \ & \ \vartheta_{\scriptscriptstyle i} \ \\
\hline
\hline
& & \\[-10pt]
\ 1 \ & \ 2+7\epsilon-\epsilon^2-8\epsilon^3-3\epsilon^4 \ & \ \epsilon^2-\epsilon^3-\epsilon^4 \ \\[2pt]
\hline
& & \\[-10pt]
\ 2 \ & \ -1+2\epsilon^2-\epsilon^3 \  & \ 4\epsilon+\epsilon^2-2\epsilon^3 \ \\[2pt]
\hline
& & \\[-10pt]
\ 3 \ & \ 10-3\epsilon-12\epsilon^2+\epsilon^3+3\epsilon^4 \ & \ -27+15\epsilon+33\epsilon^2-8\epsilon^3-6\epsilon^4 \ \\[2pt]
\hline
& & \\[-10pt]
\ 4 \ & \ -2\epsilon+10\epsilon^2-3\epsilon^4 \ & \ 2+3\epsilon-10\epsilon^2+3\epsilon^4 \ \\[2pt]
\end{array}
$$\\
Finally, one obtains the equality
\begin{footnotesize}
$$\overline T^{\,2} \,=\, 11\left(4X^3+7X^2-6X+19\right)\!
\big(1458X^3 + 12640X^2 - 56666X + 59141 - (107X^2 + 7296X - 5508)Y\big)^2$$
\end{footnotesize}
\!\!from the Fourier $q_{\scriptscriptstyle\ast}$-expansions of \,$\overline T$, $X$ and $Y$\!.
\end{proof}

\vspace{1truemm}

\begin{rem}\label{norm}
{\rm  Two polynomials $f_1(x), f_2(x)$ satisfying (\ref{almost_equation}) could be explicitly given:
\begin{scriptsize}
$$
\begin{array}{ccl}
f_1(x) & \!\!\!\!=\!\!\! & \left(-3468 - 14558 \epsilon - 7094 \epsilon^2 + 5265 \epsilon^3 + 2866 \epsilon^4\right)x^3 \,+\,
\left(-30028 - 126111 \epsilon - 61527 \epsilon^2 + 45617 \epsilon^3 + 24848 \epsilon^4\right)x^2 \, \,+\\[7pt]
& &  \left(134152 + 563914 \epsilon + 276359 \epsilon^2 - 204056 \epsilon^3 - 111479 \epsilon^4\right)x \,+\,
139612 - 587393 \epsilon - 288873 \epsilon^2 + 212613 \epsilon^3 + 116417 \epsilon^4,\\[7pt]
f_2(x) & \!\!\!\!=\!\!\! & \left(251 + 1054 \epsilon + 522 \epsilon^2 - 382 \epsilon^3 - 210 \epsilon^4\right)x^2 \,+\,
\left(17231 + 72451 \epsilon + 35622 \epsilon^2 - 26226 \epsilon^3 - 14358 \epsilon^4\right)x \, \,+\\[7pt]
& &  -13166 - 55263 \epsilon - 26705 \epsilon^2 + 19969 \epsilon^3 + 10808 \epsilon^4.
\end{array}
$$
\end{scriptsize}
\!\!Note that the function
$$\sqrt{-11}\,\,\dfrac{\,\big(X - 11 \epsilon - 6 \epsilon^2 + 4 \epsilon^3 + 2 \epsilon^4\big)^{\!5}\,\,\widehat T\,}
{\,f_1(X) \,+\, f_2 (X)\,Y} $$
generates \,$\Q(\zeta)\!\big(X_{ns}(11)\big)$ over \,$\Q(\zeta)\!\big(X^+_{ns}(11)\big)$.
Its norm to \,$\Q\!\left(\sqrt{-11}\,\right)\!\!\big(X_{ns}(11)\big)$ is, up to sign, the generator
for \,$\Q\big(X_{ns}(11)\big)$ over \,$\Q\big(X^+_{ns}(11)\big)$ given by
\mbox{\,$\left(4X^3+7X^2-6X+19\right)^2T$\!.\, The point} in taking the trace \,$\overline T$\,
rather than the norm of the function
\,$\big(X - 11 \epsilon - 6 \epsilon^2 + 4 \epsilon^3 + 2 \epsilon^4\big)^{\!5}\,\,\widehat T\,$
as a generator is the much simpler expression for its square inside \,$\Q[X,Y]$.
}
\end{rem}

\vspace{1truemm}

\begin{rem}\label{trace1}
{\rm
The trace \,$\overline T$\!,\, hence also the generator \,$T$\!,\, can be explicitly given in terms of Siegel functions
from the data in the proof of Proposition \ref{prop2} and in the statements of Proposition~\ref{prop1} and Lemma \ref{K-L}.}
\end{rem}

\vspace{1truemm}

\begin{rem}\label{trace2}
{\rm
The proof of Proposition \ref{prop2} is in fact an alternative explicit way of deducing the equation~(\ref{xns}).
Indeed, the square of the trace \,$\overline T$\,
is defined over $\Q$, hence either \,${^\varsigma}\overline T=\overline T$\, or \,${^\varsigma}\overline T=-\overline T$\,
must hold for the complex conjugation $\varsigma$.
In the first case, \,$\overline T$ would already be defined over $\Q$ and the minimal field of definition for its values
at the cusps should necessarily be~$\Q(\zeta)$. But all the values of the function \,$11\left(4X^3+7X^2-6X+19\right)$\,
at the cusps can be computed from those given in Proposition \ref{prop1} and can be checked to be squares in~$\Q(\epsilon)$.
So~\,$\overline T/\sqrt{-11}$\, lies in $\Q\!\left(X_{ns}(11)\right)$.}
\end{rem}

\vspace{2truemm}

\section{The Jacobian of $X_{ns}(11)$}\label{Jacobian}

In this section we exploit the equations (\ref{xns+}), (\ref{xns}) and
the regular differentials on the modular curve $X_0(121)$ to make explicit
an isomorphism defined over $\Q$ between the new part of its Jacobian and the Jacobian $J_{ns}(11)$ of the
curve $X_{ns}(11)$. First, we need to recall some concepts and tools.

\vspace{2truemm}

An abelian variety $\bar{\mathcal{A}}$ is an \emph{optimal quotient}\, of an abelian variety $\mathcal{A}$ over a
number field~$K$ if there is a surjective morphism $\bar\pi\colon \mathcal{A}\!\rightarrow\!\bar{\mathcal{A}}$ defined
over $K$ with connected kernel. In such a case, $\bar{\mathcal{A}}$ is determined by the pull-back
$\bar\pi^*(\Omega^1_{\bar{\mathcal{A}}/K})$ of the regular differentials defined over~$K$.
Any basis \,$\omega_1,\cdots,\omega_n$ of this $K$-vector space yields a lattice for $\bar{\mathcal{A}}$
as a complex torus by taking in $\C^n$ the elements of the form
$$\left(\int_\gamma\!\omega_1\,, \,\dots\,,\int_\gamma\!\omega_n\right)\!,$$
for $\gamma$ running over the homology group $H_1(\mathcal{A},\Z)$.
For any surjective morphism $\pi\colon \mathcal{A}\!\rightarrow\!\mathcal{B}$ of abelian varieties  defined over $K$,
there exists a morphism $\bar\pi\colon \mathcal{A}\!\rightarrow\!\bar{\mathcal{A}}$ onto an optimal quotient along with
an isogeny $\mu\colon \bar{\mathcal{A}}\!\rightarrow\!\!\mathcal{B}$, both defined over $K$, making commutative
the following diagram:
$$\xymatrix{
 \!\mathcal{A} \ar@{->}[dr]^{\pi}\ar@{ ->}[d]_{\bar\pi}& \\
 \!\!\bar{\mathcal{A}} \ar@{->}[r]^{\!\!\!\mu}& \mathcal{B} \\
 }$$
Whenever $\mathcal{A}$ is the Jacobian of a curve $X$ defined over $K$ and $\mathcal{B}$ is an elliptic curve,
there is a morphism $\phi\colon X\!\rightarrow\!\mathcal{B}$ defined over $K$ inducing $\pi$,
hence there exists a non-constant morphism $\bar\phi\colon X\!\rightarrow\!\bar{\mathcal{A}}$\,
for which the diagram
\begin{equation}\label{optimal}
\xymatrix{
 \,X \ar@{->}[dr]^{\phi}\ar@{ ->}[d]_{\bar\phi}& \\
 \!\!\bar{\mathcal{A}} \ar@{->}[r]^{\!\!\!\mu}& \mathcal{B} \\
 }
\end{equation}
also holds.

\vspace{2truemm}

We need to consider the classical modular setting $X=\!X_0(N)$.
The \emph{new part} $J_0(N)^{\new}$ of the Jacobian $J_0(N)$ is the optimal quotient defined as follows.
Take the usual local parameter \,\mbox{$q\!:=\!e^{\,2\pi i\tau}$} for~$\tau$ in the complex upper half-plane, so that
the space $S_2(\Gamma_0(N))$ of weight $2$ cuspforms can be seen, through their Fourier $q$-expansions,
inside the $\C$-vector space \,$\C[[q]]$.
The map \,$h(q)\longmapsto h(q)\,dq/q$\, yields an isomorphism from $S_2(\Gamma_0(N))$ to the space
of regular dif\-fe\-ren\-tials \,$\Omega^1_{X_0(N)/\C}$\,
such that \,$\Omega^1_{X_0(N)/\Q}$ is the image of the $\Q$-vector space
of rational cuspforms, $S_2(\Gamma_0(N))\cap\Q[[q]]$.
The abelian variety $J_0(N)^{\new}$ corresponds to the subspace $S_2(\Gamma_0(N))^{\new}\cap \Q[[q]]$
which is orthogonal to the subspace of oldforms with respect to the Petersson product.

\vspace{2truemm}

Let us now focus on the genus $6$ curve $X_0(121)$ and its Jacobian $J_0(121)$.
There are exactly four $\Q$-isogeny classes of elliptic curves with conductor $121$.
They can be represented by the Weierstrass models
\begin{equation}\label{elliptic}
\begin{array}{lll}
E_A:\quad y^2+xy+y=x^3+x^2-30x-76,\\[5pt]
E_B:\quad y^2+y=x^3-x^2-7x+10,\\[5pt]
E_C:\quad y^2+xy=x^3+x^2-2x-7,\\[5pt]
E_D:\quad y^2+y=x^3-x^2-40x-221,
\end{array}
\end{equation}
with respective labels $A1, B1, C1, D1$ in Cremona's tables~\cite{Cre}.
These four elliptic curves are optimal quotients of $J_0(121)$ and $J_0(121)^{\new}$ over $\Q$.
Each of them admits a non-constant morphism from $X_0(121)$
defined over $\Q$ such that the
pull-back of the invariant differential is, up to sign, the regular differential on $X_0(121)$
corresponding to the normalized newform in $S_2(\Gamma_0(121))$ attached to the $\Q$-isogeny class of
the elliptic curve. Let $f_{\!A}, f_{\!B}, f_C, f_{\!D}$ denote these newforms. Thus, the sets
\begin{small}
\begin{equation*}
\!\!\!
\Lambda_A\!:=\left\{\int_\gamma\!f_A(q)\frac{dq}{q}\right\}, \quad
\Lambda_B\!:=\left\{\int_\gamma\!f_B(q)\frac{dq}{q}\right\}, \quad
\Lambda_C\!:=\left\{\int_\gamma\!f_C(q)\frac{dq}{q}\right\}, \quad
\Lambda_D\!:=\left\{\int_\gamma\!f_D(q)\frac{dq}{q}\right\},
\end{equation*}
\end{small}
\!\!\!for $\gamma$ running over the homology group $H_1(X_0(121),\Z)$, are the lattices corresponding to
the elliptic curves $E_A, E_B, E_C, E_D$, respectively.

\vspace{2truemm}

By Section 8 in \cite{Chen}, $J_0(121)^{\new}$ is isogenous over $\Q$ to the Jacobian $J_{ns}(11)$.
In order to show this explicitly, we exploit the basis for the space of regular differentials
\,$\Omega^1_{X_{ns}(11)/\Q}$ that was used in \cite{DFGS} to prove that the automorphism group
of $X_{ns}(11)$ is isomorphic to Klein's four group.
The quotients of the curve by the involutions in this group are
the elliptic curves~$E_B, E_C$ and the genus~$2$ curve given by the equation
\begin{equation}\label{genustwo}
\cX\,: \ \  y^2 \,=\, -\left(4x^3-4x^2-28x+41\right)\!\left(4x^3+7x^2-6x+19\right)\!.
\end{equation}
One has a commutative diagram
\begin{equation*}\label{diagrama}
\xymatrix{
& & X_{ns}(11) \ar@{->}[lld]_{\phi_{\!B}}
\ar@{->}[d]^{\!\phi_{\!\cX}}
\ar@{->}[rdd]^{\phi_{\!D}}
\ar@{->}[ldd]_{\phi_{\!A}}
\ar@{->}[rrd]^{\phi_{C}} & & \\
E_B & & \cX \ar@{->}[ld]^{\pi_{\!A}}
\ar@{->}[rd]_{\pi_{\!D}} &  & E_C \\
& E_A &  & E_D &
}
\end{equation*}
with degree $2$ morphisms $\phi_{\!\scriptscriptstyle B}, \phi_{\!\scriptscriptstyle\cX},
\phi_{\!\scriptscriptstyle C}$ and degree $3$ morphisms $\pi_{\!\!\scriptscriptstyle A}, \pi_{\!\scriptscriptstyle D}$
which, on the models for these curves given by (\ref{xns+}), (\ref{xns}), (\ref{elliptic}) and (\ref{genustwo}),
are defined as follows:
\begin{small}
$$
\phi_{\!\scriptscriptstyle B}(x,y,t)=\left(x,y\right), \qquad  \quad
\phi_{\!\scriptscriptstyle\cX}(x,y,t)=\left(x,\left(2y+1\right)t\right), \qquad \quad
\phi_{\scriptscriptstyle C}(x,y,t)=\left(-x-1,(t+x+1)/2\right),
$$\\[-20pt]
$$\begin{array}{l}
\!\!\pi_{\!\scriptscriptstyle A}(x,y) = \left(\!\dfrac{\,- 13 x^3 + 13 x^2 + 102 x -147  \,}{\,4x^3-4x^2-28x+41\,} \, ,
\, \dfrac{\,\left(- 8 x^3 + 19 x^2 - 10 x + 6\right)y\,}{\,2\left(4x^3-4x^2-28x+41\right)^2\,}
+ \dfrac{\,9 x^3 - 9 x^2 - 74 x + 106\,}{\,2\left(4x^3-4x^2-28x+41\right)\,}\!\right)\!,\\[25pt]
\!\!\pi_{\!\scriptscriptstyle D}(x,y) = \left(\!\dfrac{\,27 x^3 + 138 x^2 - 101 x -749   \,}{\,4x^3+7x^2-6x+19\,} \, ,
\, \dfrac{\,121\left(x^3 - x^2 - 29 x - 53\right)y\,}
{\,2\left(4x^3+7x^2-6x+19\right)^2\,} \,-\, \dfrac{\,1\,}{\,2\,}\!\right)\!.
\end{array}
$$
\end{small}

\noindent We claim that the elliptic curves $E_A, E_B, E_C, E_D$ are also optimal quotients of $J_{ns}(11)$ over~$\Q$.
Indeed, it suffices to replace the morphism $\phi$ in (\ref{optimal}) with each of 
$\phi_{\!\!\scriptscriptstyle A},\phi_{\!\scriptscriptstyle B},\phi_{\!\scriptscriptstyle C},\phi_{\!\scriptscriptstyle D}$
and take into account that neither $E_A$ nor $E_D$ admits an isogeny of degree $2$ or $3$ defined over~$\Q$
to another elliptic curve; see Table 1 in \cite{Cre}.
This fact suggests that $J_0(121)^{\new}$ and $J_{ns}(11)$ could be isomorphic.

\vspace{2truemm}

\begin{thm}\label{isom}
The abelian variety $J_0(121)^{\new}$ is isomorphic over \,$\Q$ to $J_{ns}(11)$.
Moreover, there exists an isogeny defined over \,$\Q$
$$J_{ns}(11) \longrightarrow E_A \times E_B\times E_C\times E_D$$
with kernel isomorphic to $(\Z/2\Z)^4\!\times\!(\Z/3\Z)^2$.
\end{thm}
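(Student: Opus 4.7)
The strategy is to realize both $J_{ns}(11)$ and $J_0(121)^{\new}$ as complex tori $\C^4/L$ with respect to matching $\Q$-rational bases of differentials indexed by the four elliptic factors $E_A,E_B,E_C,E_D$, and to compare their period lattices inside $\Lambda_A\times\Lambda_B\times\Lambda_C\times\Lambda_D$.

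First, I would exhibit a $\Q$-rational basis $\omega_A,\omega_B,\omega_C,\omega_D$ of $\Omega^1_{X_{ns}(11)/\Q}$ by setting $\omega_E:=\phi_E^{*}(\iota_E)$, where $\iota_E$ is the invariant differential on the Weierstrass model of $E$ in~(\ref{elliptic}). The explicit formulas for $\phi_B,\phi_C,\phi_{\cX},\pi_A,\pi_D$ let me write each $\omega_E$ as a $\Q$-linear combination of the standard basis $dx/(2y+1),\, x\,dx/(2y+1),\, dx/t,\, x\,dx/t$ of $\Omega^1_{X_{ns}(11)/\Q}$; these four combinations are $\Q$-linearly independent since, by Chen's result recalled above, $J_{ns}(11)$ is isogenous over $\Q$ to $E_A\times E_B\times E_C\times E_D$.

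Next, I would compute numerically a $\Z$-basis of $H_1(X_{ns}(11),\Z)$ from the affine model determined by~(\ref{xns+}) and~(\ref{xns}), and integrate each $\omega_E$ over this basis to produce the period lattice $L_{ns}\subset\C^4$. By construction its $E$-th projection lies inside $\Lambda_E$, so $L_{ns}$ sits as a finite-index sublattice of $\Lambda_A\times\Lambda_B\times\Lambda_C\times\Lambda_D$. The period lattice $L_{\new}$ of $J_0(121)^{\new}$ relative to the rational newform basis $f_A\,dq/q,\dots,f_D\,dq/q$ is, by definition, also a sublattice of this product. Writing both $L_{ns}$ and $L_{\new}$ as $8\times 8$ integer matrices with respect to a common $\Z$-basis of $\Lambda_A\times\Lambda_B\times\Lambda_C\times\Lambda_D$ and checking numerically that they agree should yield $L_{ns}=L_{\new}$. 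Since the two bases $\{\omega_E\}$ and $\{f_E\,dq/q\}$ are each $\Q$-rational and both correspond, via the respective isogenies to $E_A\times E_B\times E_C\times E_D$, to the same pullbacks of the invariant differentials $\iota_E$, this equality of lattices produces the desired isomorphism over $\Q$.

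For the second assertion, the kernel of the induced product isogeny $J_{ns}(11)\to E_A\times E_B\times E_C\times E_D$ is canonically isomorphic to the quotient $(\Lambda_A\times\Lambda_B\times\Lambda_C\times\Lambda_D)/L_{ns}$; its order and elementary divisors can be read off from the Smith normal form of the same integer transition matrix. The expected total $144=6\cdot 2\cdot 2\cdot 6=\deg\phi_A\cdot\deg\phi_B\cdot\deg\phi_C\cdot\deg\phi_D$ matches $|(\Z/2\Z)^4\times(\Z/3\Z)^2|$, and the elementary divisors should come out to $2,2,2,2,3,3$. The main obstacle throughout is numerical precision: the integration must be carried out with enough digits to exclude any small-index discrepancy between $L_{ns}$ and $L_{\new}$ and to pin down the Smith form; once the lattices agree to sufficient accuracy, the $\Q$-rationality of the two chosen sets of differentials forces exact equality.
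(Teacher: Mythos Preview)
Your proposal is correct and follows essentially the same route as the paper: both choose $\Q$-rational bases of differentials on $J_{ns}(11)$ and $J_0(121)^{\new}$ by pulling back invariant differentials from the four elliptic factors (so that each period lattice sits inside $\Lambda_A\times\Lambda_B\times\Lambda_C\times\Lambda_D$), compute the two period matrices numerically (the paper uses modular symbols for $\mathit{\Omega}_{\new}$ and a plane model in $T,\,Z=(2Y+1)T$ for $\mathit{\Omega}_{ns}$), and verify that they are related by a matrix in $\GL_8(\Z)$; the kernel of the product isogeny is then read off from the quotient $(\Lambda_A\times\Lambda_B\times\Lambda_C\times\Lambda_D)/\Lambda_{\new}\simeq(\Z/2\Z)^2\times(\Z/6\Z)^2$. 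The only refinement in the paper is an explicit rescaling of the pull-backs (by the factor $4$) so that the projections hit the newform lattices $\Lambda_E$ on the nose, which pins the comparison matrix $\mathcal{N}$ down to the identity.
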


\begin{proof}
By computational means, we produce period matrices $\mathit{\Omega}_{\new}$ and $\mathit{\Omega}_{ns}$
for the abelian varieties $J_0(121)^{\new}$ and $J_{ns}(11)$, respectively, such that
\begin{equation}\label{period_matrices}
\mathit{\Omega}_{ns}\,\,\mathcal{M} \,=\, \mathcal{N}\,\mathit{\Omega}_{\new}
\end{equation}
for some matrices \,$\mathcal{M}\!\in\!\GL_8(\Z)$ and \,$\mathcal{N}\!\in\!\GL_4(\Q)$.
This relation amounts to the first part of the statement.

\vspace{2truemm}

To make the computations easy, we choose in both cases a suitable basis of regular differentials.
Specifically, the directions of each basis are the pull-backs to $J_0(121)^{\new}$ or to~$J_{ns}(11)$, respectively,
of the spaces \,$\Omega^1_{E_A/\Q}$\,, $\Omega^1_{E_B/\Q}$\,, $\Omega^1_{E_C/\Q}$\,, $\Omega^1_{E_D/\Q}$.
Moreover, the elements in each basis yield, by integration over the homology group
of the respective abelian variety,
the above elliptic lattices $\Lambda_A, \Lambda_B, \Lambda_C, \Lambda_D$.
This is possible due to the fact that the elliptic curves $E_A, E_B, E_C, E_D$ are optimal quotients of both
abelian varieties over $\Q$. The elements of such bases are then determined up to sign, and the
lattices $\Lambda_{\new}$ and $\Lambda_{ns}$ of $J_{0}(121)^{\new}$ and $J_{ns}(11)$, respectively, defined
by these bases are both contained in the lattice
$$\Lambda_A\times \Lambda_B\times \Lambda_C\times \Lambda_D$$
attached to the abelian variety $E_A\times E_B\times E_C\times E_D$.

\vspace{2truemm}

The obvious basis of regular differentials to be taken for $J_0(121)^{\new}$ is the one
corresponding to the newforms $f_{\!A}, f_{\!B}, f_C, f_{\!D}$.
A basis for $H_1(X_0(121),\Z)$ can be explicitly given as a set of modular symbols. The following
subset provides a basis for the new part:
$$\begin{array}{llll}
\gamma_1=\{0,3/52\}, \quad &\gamma_2=\{ 0, 4/97\}, \quad &\gamma_3=\{ 0,19/92\}, \quad &\gamma_4=\{ 0,59/119\},\\[4pt]
\gamma_5=\{ 0,8/57\}, \quad &\gamma_6=\{ 0, 11/74\}, \quad &\gamma_7=\{ 0, 3/28\}, \quad &\gamma_8=\{ 0, 11/37\}.
\end{array}
$$
One can then compute the matrix period attached to these data:
\begin{footnotesize}
$$
\mathit{\Omega}_{\new} \,:=\,\left( \begin{array}{cccc}
\ds\int_{\gamma_1}\!\!\!f_{\!A} (q)\frac{dq}{q} & \cdots & \cdots & \ds\int_{\gamma_8}\!\!\!f_{\!A}(q)\frac{dq}{q}\\[4 pt]
\vdots &\vdots &  \vdots & \vdots\\[4 pt]
\vdots &\vdots &  \vdots & \vdots\\[4 pt]
\ds\int_{\gamma_1}\!\!\!f_{\!D} (q)\frac{dq}{q} & \cdots & \cdots & \ds\int_{\gamma_8}\!\!\!f_{\!D}(q)\frac{dq}{q}
\end{array} \right).
$$
\end{footnotesize}
\!\!Its columns regarded in $\C^4$ are a $\Z$-basis for $\Lambda_{\new}$. A $\Z$-basis for each of the
elliptic lattices $\Lambda_A, \Lambda_B, \Lambda_C, \Lambda_D$ can also be computed, and the quotient
group \,$\left(\Lambda_A\times\Lambda_B\times\Lambda_C\times\Lambda_D\right)\!/\Lambda_{\new}$\, is then checked to
be isomorphic to $(\Z/2\Z)^2\!\times\!(\Z/6\Z)^2$, which implies the second part of the statement.

\vspace{2truemm}

As for the basis of regular differentials on $X_{ns}(11)$, we multiply by suitable constants the invariant
differentials of the elliptic quotients and then take the corresponding pull-backs:\\[-20pt]

\begin{small}
$$
\omega_{\!\scriptscriptstyle A}:=\phi_{\!\!\scriptscriptstyle A}^*\left(\frac{4\,dx}{\,2y+x+1\,}\right), \quad
\omega_{\!\scriptscriptstyle B}:=\phi_{\!\scriptscriptstyle B}^*\left(\frac{4\,dx}{\,2y+1\,}\right), \quad
\omega_{\scriptscriptstyle C}:=\phi_{\!\scriptscriptstyle C}^*\left(\frac{4\,dx}{\,2y+x\,}\right),\,\quad
\omega_{\!\scriptscriptstyle D}:=\phi_{\!\scriptscriptstyle D}^*\left(\frac{-4\,dx}{\,2y+1\,}\right),
$$
\end{small}

\!\!
\noindent that is to say,\\
$$
\omega_{\!\scriptscriptstyle A} := \frac{\,44\,dX\,}{\,\left(2Y+1\right)T\,}\,, \qquad \omega_{\!\scriptscriptstyle B}:= \frac{\,4\,dX\,}{\,2Y+1\,}\,,\qquad
\omega_{\scriptscriptstyle C} := \frac{\,-4\,dX\,}{T}\,, \qquad \omega_{\!\scriptscriptstyle D}:=\frac{\,4\left(3 X-1\right)dX\,}{\,\left(2Y+1\right)T\,}\,,
$$\\[-5pt]
where $X,Y,T$ are the functions in Propositions \ref{prop1} and \ref{prop2}.
In order to compute a period matrix, we need to express these differentials in terms of a
plane (singular) model for the curve, which can be derived from the equations (\ref{xns+}) and (\ref{xns}).
Specifically, we take \,$T$\, and \,$Z:=\left(2Y+1\right)T$\, as generators of the function field over $\Q$,
so that
$$X \,=\, \dfrac{\,12\,T^4 \,+\, 187\,T^2 \,+\, Z^2\,}{\,4\left(T^4\,-\,11\,T^2\,+\,Z^2\right)\,}\,.$$\\[-5pt]
Thus, from the relation \,\,$TdT\!=\!-(6X^2+7X-3)dX$\, one obtains
$$\omega_{\scriptscriptstyle C}\,=\,\dfrac{\,32\left(T^4-11T^2+Z^2\right)^2\,}
{\,\left(18\,T^4+121\,T^2+7Z^2\right)\!\left(32\,T^4 + 605\,T^2 - Z^2\right)\,}\,dT$$\\[-15pt]
and
$$\omega_{\!\scriptscriptstyle A}\,=\,\dfrac{\,-11\,T\,}{\,Z\,}\,\omega_{\scriptscriptstyle C}\,, \qquad  \qquad
\omega_{\!\scriptscriptstyle B}\,=\,\dfrac{\,-T^2\,}{\,Z\,}\,\omega_{\scriptscriptstyle C}\,, \qquad \qquad
\omega_{\!\scriptscriptstyle D}\,=\,\dfrac{\,-T\left(32\,T^4 + 605\,T^2 - Z^2\right)\,}{\,4\,Z\left(T^4-11\,T^2+Z^2\right)\,}\,\omega_{\scriptscriptstyle C}\,.
$$\\[-5pt]
One can then produce, using algebraic computational software such as \emph{Maple}, a period matrix~$\mathit{\Omega}_{ns}$
attached to this basis of regular differentials. The proof concludes by checking that the matrix
$$
\mathcal{M}:=\left (\!\!\begin{array}{c}  \operatorname{Re\,}(\mathit{\Omega}_{ns})\\[2pt]
\operatorname{Im\,} (\mathit{\Omega}_{ns})\end{array}\!\!\right)^{\!\!-1}
\!\left (\!\!\begin{array}{c} \operatorname{Re\,} (\mathit{\Omega}_{\new})\\[2pt]
\operatorname{Im\,} (\mathit{\Omega}_{\new}) \end{array}\!\!\right)
$$\\[-5pt]
lies in $\!\GL_8(\Z)$ and, thus, satisfies (\ref{period_matrices}) for $\mathcal{N}$ equal to the identity matrix.
\end{proof}

\vspace{1truemm}

\begin{rem}{\rm
Though it is not needed for the proof, it can also be checked from the period matrix $\mathit{\Omega}_{ns}$ that the
quotient $\left(\Lambda_A\times\Lambda_B\times\Lambda_C\times\Lambda_D\right)\!/\Lambda_{ns}$ is isomorphic to $(\Z/2\Z)^2\!\times\!(\Z/6\Z)^2$.
Furthermore, the lattice $\Lambda_\cX$ of the Jacobian $\Jac(\cX)$, defined by the regular differentials
whose pull-backs by $\phi_{\!\scriptscriptstyle\cX}$ are \,$\omega_{\!\scriptscriptstyle A}, \omega_{\!\scriptscriptstyle D}$,
is a sublattice of $\Lambda_A\!\times\!\Lambda_D$ with quotient isomorphic to $(\Z/3\Z)^2$.
Thus, there is an isogeny defined over $\Q$
$$J_{ns}(11) \longrightarrow \Jac(\cX)\times E_B\times E_C$$
with kernel isomorphic to $(\Z/2\Z)^4$.
}
\end{rem}

\vspace{1truemm}

\begin{rem}{\rm
The normalized weight $2$ cuspforms corresponding to the pull-backs
of the spaces \,$\Omega^1_{E_A/\Q}$\,, $\Omega^1_{E_B/\Q}$\,, $\Omega^1_{E_C/\Q}$\,, $\Omega^1_{E_D/\Q}$\,
by the morphisms $\phi_{\!\!\scriptscriptstyle A},\phi_{\!\scriptscriptstyle B},\phi_{\!\scriptscriptstyle C},\phi_{\!\scriptscriptstyle D}$, res\-pec\-tive\-ly, are canonically defined by
the regular differentials \,$\omega_{\!\scriptscriptstyle A}, \omega_{\!\scriptscriptstyle B}, \omega_{\scriptscriptstyle C}, \omega_{\!\scriptscriptstyle D}$\, of $X_{ns}(11)$ in the proof
of Theorem \ref{isom}. The first coefficients of their Fourier $q_{\scriptscriptstyle\ast}$-expansions
can be computed from the expressions given in Propositions \ref{prop1} and \ref{prop2} for the generators $X,Y,T$ of the function field of the curve:

\begin{small}
$$\,\,\,\begin{array}{c||c|c}
q_{\scriptscriptstyle\ast}^n &
\dfrac{\,\sqrt{-11}\,}{\,4\left(\epsilon-2\right)^2}\,(3 - \epsilon^2)(1 + \epsilon - \epsilon^2)
\,\omega_{\!\scriptscriptstyle A}\,\dfrac{\,q_{\scriptscriptstyle\ast}\,}{\,dq_{\scriptscriptstyle\ast}\,} &
\dfrac{\,\sqrt{-11}\,}{\,4\left(\epsilon-2\right)^4}\,
(11 + \epsilon - 22 \epsilon^2 + 2 \epsilon^3 + 5 \epsilon^4)\,
\omega_{\scriptscriptstyle C}\,\dfrac{\,q_{\scriptscriptstyle\ast}\,}{\,dq_{\scriptscriptstyle\ast}\,}
\\[10pt]
\hline\hline
& & \\[-10pt]
\,q_{\scriptscriptstyle\ast} & 1 & 1 \\[4pt]
q_{\scriptscriptstyle\ast}^2 &
 -8 \epsilon - 6 \epsilon^2 + 3 \epsilon^3 +
 2 \epsilon^4  &
2 \epsilon - \epsilon^3\\[4pt]
q_{\scriptscriptstyle\ast}^3 &
-2\left(-2 - 8 \epsilon - 5 \epsilon^2 + 3 \epsilon^3 + 2 \epsilon^4\right)  &
2\left(-2 + 2 \epsilon +  \epsilon^2 -  \epsilon^3\right) \\[4pt]
q_{\scriptscriptstyle\ast}^4 &-6 \epsilon -
 3 \epsilon^2 + 2 \epsilon^3 + \epsilon^4&
2 -
 3 \epsilon^2 + \epsilon^4\\[4pt]
q_{\scriptscriptstyle\ast}^5 &
 -3 - 6 \epsilon - 2 \epsilon^2 +
 2 \epsilon^3 + \epsilon^4  &
3 - 4 \epsilon^2 + \epsilon^4 \\[4pt]
q_{\scriptscriptstyle\ast}^6 &
2\left(-3 - 6 \epsilon - 2 \epsilon^2 + 2 \epsilon^3 + \epsilon^4\right)  &
2\left(-3 + 4 \epsilon^2 -  \epsilon^4\right)\\[2pt]
\end{array}
$$

$$\!\!\!\!\!\!\!\!\!\!\!\!\!\!\!\!\!\begin{array}{c||c|c}
q_{\scriptscriptstyle\ast}^n &
\ \dfrac{\,-11\,}{\,4\left(\epsilon-2\right)\,}\,(2 - \epsilon - 3 \epsilon^2 + \epsilon^4)\,
\omega_{\!\scriptscriptstyle B}\,\dfrac{\,q_{\scriptscriptstyle\ast}\,}{\,dq_{\scriptscriptstyle\ast}\,} \ &
\dfrac{\,\sqrt{-11}\,}{\,\,4\,}\,
\epsilon\,(1 - \epsilon)(1 + \epsilon) (2 + \epsilon)\,
\omega_{\!\scriptscriptstyle D}\,\dfrac{\,q_{\scriptscriptstyle\ast}\,}{\,dq_{\scriptscriptstyle\ast}\,}
\\[10pt]
\hline\hline
& & \\[-10pt]
\,q_{\scriptscriptstyle\ast} & 1 & 1 \\[4pt]
q_{\scriptscriptstyle\ast}^2 &
0 &
2\left(-4 \epsilon + 3 \epsilon^3 +  \epsilon^4\right)\\[4pt]
q_{\scriptscriptstyle\ast}^3 &
-3 + 3 \epsilon + 4 \epsilon^2 - \epsilon^3 - \epsilon^4 &1 + \epsilon - 2 \epsilon^2 -
 3 \epsilon^3 - \epsilon^4\\[4pt]
q_{\scriptscriptstyle\ast}^4 &
2\left(2 - 4 \epsilon^2 +  \epsilon^4\right) &
2\left(-2 \epsilon^3 -  \epsilon^4\right )\\[4pt]
q_{\scriptscriptstyle\ast}^5 &
-3\left(-2 + \epsilon + \epsilon^2\right) &3 \epsilon + \epsilon^2 - 4 \epsilon^3 - 2 \epsilon^4
\\[4pt]
q_{\scriptscriptstyle\ast}^6 &
0  & 2\left(3 \epsilon +  \epsilon^2 - 4 \epsilon^3 - 2 \epsilon^4\right)
\\[2pt]
\end{array}
$$
\end{small}

\noindent Note that the cuspforms provided by the pull-backs of the invariant differentials of
the elliptic curves $E_A, E_B, E_C, E_D$ are not normalized: the leading coefficient in each case is,
up to $4$ times a unit in $\Z[\zeta]$, a power of a generator of the prime ideal over $11$.
The corresponding normalized cuspforms, as it happens for the functions $X$, $Y$ and $T/\sqrt{-11}$,
have Fourier coefficients in~$\Q(\epsilon)$.
It may be that this property holds for all primes $p\geq 11$, namely that there exists a basis of~\,$\Omega^1_{X_{ns}(p)/\Q}$
whose corresponding weight~$2$ normalized cuspforms have Fourier coefficients
in the maximal real subfield of the $p$-th cyclotomic extension of $\Q$.
}
\end{rem}

\vspace{3truemm}

It follows from Theorem \ref{isom} that $J_{ns}(11)$ is an optimal quotient of $J_0(121)$.
Nevertheless, the Riemman-Hurwitz formula implies that $X_{ns}(11)$ does not admit a non-constant
morphism from $X_0(121)$. Let us finish by giving a less obvious result: neither does its genus $2$ quotient~$\cX$.
This is related to the \emph{modularity} of curves over $\Qbar$.

\vspace{1truemm}

A curve $\cC$ defined over a number field $L$\, is called \emph{modular over $L$}\,
if there exists a non-constant morphism \,$\pi\colon\!X(N)\!\longrightarrow\cC$ defined over $L$ for some integer $N$\!.
In that case, the curve~$\cC$ is also covered by the modular curve $X_1(N^2)$, though not necessarily over $L$.
See, for instance, the appendix in \cite{Ma91}.
Both $X_{ns}(11)$ and $\cX$ admit then a non-constant morphism from~$X_1(121)$.
Let us also show that any such a morphism cannot be defined over the quadratic
field $K=\Q(\sqrt{-11}\,)$.

\vspace{1truemm}

\begin{prop}\label{points}
The curves $X_{ns}(11)$ and $\cX$ have no points defined over $K$\!.
\end{prop}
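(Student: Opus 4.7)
The plan is to rule out $K$-rational points on each curve by exploiting the elliptic quotients constructed in Section~\ref{Jacobian}. Any $(x,y,t)\in X_{ns}(11)(K)$ projects via $\pi^+$ to $(x,y)\in E_B(K)$ for which, by the defining equation~(\ref{xns}), the element $-(4x^3+7x^2-6x+19)$ must be a square in $K$; likewise, any $(x,y)\in\cX(K)$ produces via $\pi_A$ and $\pi_D$ a compatible pair of points in $E_A(K)\times E_D(K)$. The argument therefore reduces to two ingredients: (i)~an effective determination of the Mordell--Weil groups of the relevant elliptic factors over $K$, and (ii)~a finite check that no candidate point lifts.

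For $E_B$, the key input is complex multiplication. The value $j(E_B)=-2^{15}$ is precisely the root of the Hilbert class polynomial of the ring of integers $\cO_K$ of $K=\Q(\sqrt{-11})$, so $E_B$ has CM by $\cO_K$ over $K$ and is in particular isomorphic over $K$ to its quadratic twist by $-11$. Thus $E_B(K)\otimes\Q\cong(E_B(\Q)\oplus E_B^{(-11)}(\Q))\otimes\Q$, and combined with the known rank-one generator $(4,-6)\in E_B(\Q)$ and the corresponding generator of the twist, an explicit set of generators of $E_B(K)$ modulo torsion can be written down. For $E_A$ and $E_D$, the fact---used already in the proof of Theorem~\ref{isom}---that neither curve admits a $\Q$-rational isogeny of degree $2$ or $3$ controls both the $2$-descent and the $3$-descent Selmer groups over $K$; this, combined with root-number information over $K$, pins down $E_A(K)$ and $E_D(K)$.

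Step (ii) is then a finite computation. For each candidate $(x,y)\in E_B(K)$ arising from the generators one exhibits a prime $\gp$ of $\cO_K$ of good reduction for $X_{ns}(11)$ at which the class of $-(4x^3+7x^2-6x+19)$ in $K_\gp^*/(K_\gp^*)^2$ is non-trivial; this rules out the fiber condition over $\pi^+$, so $X_{ns}(11)(K)=\emptyset$. For $\cX$ one likewise inspects the finitely many pairs $(P,Q)\in E_A(K)\times E_D(K)$, and uses the explicit rational expressions for $\pi_A$ and $\pi_D$ in Section~\ref{Jacobian} to show that no pair closes up to a common $K$-rational preimage on $\cX$; equivalently, one checks that the sextic $-(4x^3-4x^2-28x+41)(4x^3+7x^2-6x+19)$ takes no square value in $K$ at the $x$-coordinate of any such preimage.

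The principal obstacle is step (i): over the quadratic field $K$, descent has to accommodate a larger unit group and a potentially richer Selmer group than over $\Q$. The CM structure of $E_B$ (with $K$ of class number one, so that $\cO_K$ is a PID with unit group $\{\pm 1\}$) and the absence of low-degree $\Q$-rational isogenies on $E_A$ and $E_D$ are precisely the features that make these Mordell--Weil computations tractable. Once the groups are known, step (ii) is a routine local verification certifiable by reduction modulo a single well-chosen prime of $\cO_K$.
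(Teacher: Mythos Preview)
Your approach has a genuine gap in the treatment of $X_{ns}(11)$. You propose to project to $E_B$ and then verify, for each point of $E_B(K)$, that $-(4x^3+7x^2-6x+19)$ is not a square in $K$. But $E_B(K)$ is infinite: you yourself note that $E_B(\Q)$ has rank one, so $E_B(K)$ has rank at least one. Checking the square condition only on a finite set of generators is meaningless, since this condition is not compatible with the group law on $E_B$; ruling out the fibre over $(4,-6)$ says nothing about the fibres over $2\cdot(4,-6)$, $3\cdot(4,-6)$, and so on. A genuine local obstruction at a prime $\gp$ would mean $X_{ns}(11)(K_\gp)=\emptyset$, which is a different (and unverified) claim from what you wrote. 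To push an argument through $E_B$ you would need a technique such as elliptic Chabauty, which you do not invoke.

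The paper sidesteps this by observing that the degree-$2$ cover $\phi_{\cX}\colon X_{ns}(11)\to\cX$ reduces the whole proposition to showing $\cX(K)=\emptyset$. For $\cX$ it uses only the quotient $\pi_A\colon\cX\to E_A$ and computes $E_A(K)$ by the twist decomposition: for $P\in E_A(K)$ one has $P+{}^\varsigma\!P\in E_A(\Q)=\{O\}$, hence ${}^\varsigma\!P=-P$, so $P$ corresponds to a $\Q$-point on the quadratic twist of $E_A$ by $K$ (curve $121C2$ in \cite{Cre}), which again has only the origin. Thus $E_A(K)=\{O\}$, and one finishes by checking that $\pi_A^{-1}(O)$ contains no $K$-point. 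No descent, Selmer, or root-number input is needed; the ranks come straight from the tables. The absence of low-degree $\Q$-isogenies on $E_A$ and $E_D$ plays no role here (it was used in Theorem~\ref{isom} for a different purpose), and your parallel use of $E_D$ is superfluous once $E_A(K)$ is known to be trivial.
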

\begin{proof}
It suffices to prove the statement for $\cX$.
The origin $O$ of~$E_A$ is the only rational point on this elliptic curve; see \cite{Cre}.
Take $P$ in $E_A(K)$. Then $P+{}^{\varsigma\!}P$ lies in $E_A(\Q)$, where \,$\varsigma$ stands for the complex conjugation.
So \,${}^{\varsigma\!}P=-P$\!,\, which means that $P$ defines a rational point on the twist of $E_A$ by $K$\!.
On this twist, which has conductor $121$ and label $C2$ in \cite{Cre},
the origin is also the only rational point. Hence $P$ must be $O$.
Since the preimages of $O$ under the map~$\pi_A$ are not defined over $K$\!,\, the result follows.
\end{proof}

\vspace{1truemm}

\begin{prop}\label{endos}
All endomorphisms of $J_0(121)$ are defined over $K$\!.
\end{prop}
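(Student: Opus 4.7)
I would establish Proposition \ref{endos} by exhibiting $\End_{\Qbar}(J_0(121))$ explicitly via the $\Qbar$-isogeny decomposition of the Jacobian and inspecting each summand and each off-diagonal block for its field of definition. The starting point is the $\Q$-isogeny
\[J_0(121)\,\sim_{\Q}\,E^2\times E_A\times E_B\times E_C\times E_D,\]
where $E:=J_0(11)$ is the elliptic curve $\mathrm{11a1}$ (occurring with multiplicity two due to the two degeneracy maps $X_0(121)\to X_0(11)$) and the decomposition of the new part is supplied by Theorem \ref{isom}.

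A direct computation of $j$-invariants yields $j(E_B)=-2^{15}$, the CM $j$-invariant of the maximal order $\cO_K=\Z[(1+\sqrt{-11})/2]$; hence $E_B$ admits complex multiplication by $\cO_K$ and, by the main theorem of complex multiplication, the full endomorphism ring $\cO_K$ is defined over $K$. The remaining four $j$-invariants $j(E),\,j(E_A),\,j(E_C),\,j(E_D)$ are easily seen not to belong to the list of CM $j$-invariants of class number one imaginary quadratic fields, so each of those factors is non-CM with $\End_{\Qbar}=\Z$.

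Next I would pair the non-CM factors according to their $\Qbar$-isogeny relations. A $\Qbar$-isogeny between two non-CM elliptic curves of conductor $11$ or $121$ must come from a quadratic twist, and the conductor constraints force the twist character to be $\chi_K$, the unique non-trivial quadratic character of conductor $11$. The proof of Proposition \ref{points} already records that $E_A^{(K)}$ lies in Cremona's class $C$, hence $E_A\sim_K E_C$. For the remaining pair, $f_{11}\otimes\chi_K$ is a weight $2$ newform at level $121$; it cannot equal $f_B$ (which is CM and therefore self-$\chi_K$-twisted) nor $f_A$ or $f_C$ (whose $\chi_K$-twist is each other and not $f_{11}$). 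Thus $f_{11}\otimes\chi_K=f_D$, which gives $E\sim_K E_D$.

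Combining these identifications yields $J_0(121)\,\sim_{\Qbar}\,E^3\times E_A^2\times E_B$ and
\[\End_{\Qbar}(J_0(121))\otimes\Q\,\cong\,M_3(\Q)\times M_2(\Q)\times K,\]
whose generators modulo $\End_{\Q}(J_0(121))\otimes\Q\cong M_2(\Q)\times\Q^4$ are the off-diagonal twist isomorphisms $E\to E_D^{(K)}$ and $E_A\to E_C^{(K)}$, together with the CM element $\sqrt{-11}\in\End(E_B)$. Each of these is defined over $K$ by construction, so $\End_{\Qbar}(J_0(121))=\End_K(J_0(121))$. The main obstacle is identifying the $\Q$-isogeny class of $E^{(K)}$, and the elimination argument above handles this without invoking external tables.
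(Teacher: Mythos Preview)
Your proof is correct and follows essentially the same route as the paper: decompose $J_0(121)$ over $\Q$ into elliptic factors, observe that $E_B$ is the unique CM factor with CM by $\cO_K$, pair the non-CM factors via the quadratic twist by $\chi_K$ (so $E_A\sim_K E_C$ and $J_0(11)\sim_K E_D$), and conclude that the full $\Qbar$-splitting already occurs over $K$. The only notable difference is how you establish $J_0(11)^{(K)}\sim_\Q E_D$: the paper simply reads this off Cremona's tables (the twist is $11A3$), whereas you argue by elimination among the four newforms at level $121$; both are valid, and your version has the virtue of being table-free. One point you leave implicit but which the paper states explicitly is that $E_A$ and $E_D$ are not $\Qbar$-isogenous (equivalently, that the three blocks $\{E,E,E_D\}$, $\{E_A,E_C\}$, $\{E_B\}$ really are distinct $\Qbar$-isogeny classes); this does follow from your elimination, since $f_{11}\otimes\chi_K=f_D\neq f_A,f_C$ and the conductors rule out a $\Q$-isogeny, but it would be worth saying so in one line.
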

\begin{proof}
The Jacobian $J_0(121)$ is isogenous over $\Q$ to the product of elliptic curves
$$E_A\times E_B\times E_C\times E_D\times J_0(11)^2.$$
The elliptic curve $E_B$ is the only one in this product having complex multiplication (by the ring of integers of $K$).
As noted in the proof of Proposition \ref{points}, $E_A$ is isogenous to $E_C$ over~$K$\!.
The same holds for $E_D$ and $J_0(11)$. Specifically, the twist of $E_D$ by $K$ is the elliptic curve with conductor $11$
and label $A3$ in \cite{Cre}.
Since none of the curves in the $\Q$-isogeny class of~$E_A$ is isomorphic
to a curve in the $\Q$-isogeny class of $E_D$,
these two elliptic curves cannot be isogenous. Hence the splitting of $J_0(121)$ over $\overline{\Q}$ is
$$E_A^{\,2}\times E_B\times E_D^{\,3}$$
and it is defined over $K$\!.
The endomorphisms of $E_B$ are defined over $K$\!, so the result follows.
\end{proof}

\vspace{1truemm}

\begin{cor}\label{milibro}
Any curve $X$ of genus \,$g>1$ defined over $K$ which can be covered by $X_0(121)$ 
admits a non-constant morphism \,$X_0(121)\!\longrightarrow\!X$ defined over $K$\!.
\end{cor}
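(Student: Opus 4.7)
The plan is to pass through Jacobians: first descend the induced homomorphism $\phi_* : J_0(121) \to \Jac(X)$ to $K$, then reconstruct the morphism of curves. Assume $\phi : X_0(121) \to X$ is a non-constant morphism defined over a finite Galois extension $L/K$, and write $\phi^* : \Jac(X) \to J_0(121)$ for the dual pullback (injective up to a finite kernel), with image $A := \phi^*(\Jac(X))$.

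I would first show that $A$ is a $K$-abelian subvariety of $J_0(121)$. By Poincar\'e reducibility, $\End^0(J_0(121))$ is a semisimple $\Q$-algebra, so $A$ is cut out by an idempotent in this algebra; by Proposition~\ref{endos} such an idempotent lies in $\End_K^0(J_0(121))$, hence $A$ is defined over $K$. Moreover, the $K$-isogeny decomposition $J_0(121) \sim_K E_A^{\,2} \times E_B \times E_D^{\,3}$ from the proof of Proposition~\ref{endos} shows that every $\Qbar$-homomorphism between the factors $E_A$, $E_B$, $E_D$ is already defined over $K$, so $\End^0_K(A) = \End^0_\Qbar(A)$.

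Next I would descend the isogeny $\phi^* : \Jac(X) \to A$ itself. The map $\sigma \mapsto (\phi^*)^\sigma \circ (\phi^*)^{-1}$ defines a cocycle $c$ in $\End^0(A)^\times$, where $\sigma$ runs over $\Gal(L/K)$; since the Galois action on $\End^0(A)$ is trivial, $c$ is simply a group homomorphism into the finite-dimensional semisimple algebra above. Using that $\Jac(X)$ carries a principal polarization defined over $K$ that is compatible via $\phi^*$ with the restriction to $A$ of the theta polarization of $J_0(121)$, this cocycle is forced to be trivial, so $\phi^*$ (and hence $\phi_*$) is defined over $K$.

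Finally, to recover $\phi$ over $K$, I would use the $\Q$-rational cusp $\infty \in X_0(121)(\Q)$ to define the canonical Abel-Jacobi embedding $\iota : X_0(121) \hookrightarrow J_0(121)$ over $\Q$. The composition $\phi_* \circ \iota$ is then a $K$-morphism $X_0(121) \to \Jac(X)$ sending $P \mapsto [\phi(P)-\phi(\infty)]$, whose image is a translate of the canonical Abel-Jacobi image of $X$; being $K$-defined and having trivial stabilizer (since $g > 1$), this translate forces the divisor class $[\phi(\infty)] \in \mathrm{Pic}^1(X)$ to be $K$-rational. Composing with the corresponding $K$-rational translation and the canonical $K$-embedding $X \hookrightarrow \mathrm{Pic}^1(X)$ then produces $\phi$ over $K$. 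The hard part is the third paragraph: while the descent of the image subvariety $A$ is immediate from Proposition~\ref{endos}, descending the isogeny $\phi^*$ itself requires trivializing the Galois cocycle $c$, for which the essential input is the rigidity provided by the $K$-rational principal polarization on $\Jac(X)$.
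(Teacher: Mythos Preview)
Your plan differs from the paper's, which simply observes (from Proposition~\ref{endos}) that the subspace $\pi^*(\Omega^1_{X/\Qbar})\cap\Omega^1_{X_0(121)/K}$ has dimension~$g$ and then invokes Section~2 of~\cite{BGGP} to produce generators of $K(X)$ inside $K(X_0(121))$. Your descent via Jacobians is a reasonable alternative, and Steps~1 and~3 are essentially fine, but Step~2 has a genuine gap.

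The polarization compatibility you invoke shows only that each $c_\sigma$ lies in the unitary group attached to the induced polarization on~$A$, i.e.\ that $c_\sigma^\dagger c_\sigma=1$ for the corresponding Rosati involution; it does not force $c_\sigma=1$. Since $\Gal(L/K)$ acts trivially on $\End^0(A)$, your cocycle is a homomorphism into this unitary group, and it is a coboundary only if it is identically~$1$. But that unitary group (a product of orthogonal and unitary groups reflecting the isotypic pieces of~$A$) certainly contains nontrivial elements of finite order, so nothing you have written rules them out. More to the point, Proposition~\ref{endos} gives $\End_{\Qbar}J_0(121)=\End_K J_0(121)$, but this does \emph{not} by itself yield $\Hom_{\Qbar}(\Jac(X),J_0(121))=\Hom_K(\Jac(X),J_0(121))$: already for two non-CM elliptic curves over a field~$k$ that are nontrivial quadratic twists of one another, both endomorphism rings are~$\Z$ over~$k$ and over~$\Qbar$, yet the $\Hom$ group jumps from~$0$ to~$\Z$. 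Your Step~3 genuinely needs $\phi_*$ itself (up to $\pm1$ and $\Aut(X)$) to descend, since an arbitrary $K$-rational surjection $J_0(121)\to\Jac(X)$ need not carry the Abel--Jacobi image of $X_0(121)$ onto a translate of~$X$. The paper's route via~\cite{BGGP} sidesteps this obstruction entirely by working directly with the $K$-rational space of differentials rather than attempting to descend~$\phi_*$.
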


\begin{proof}
For a non-constant morphism \,$\pi: X_0(121)\!\longrightarrow\!X$ defined over $\Qbar$,
Proposition \ref{endos} implies that 
$$\pi^*(\Omega^1_{X/\overline{\Q}}) \cap \Omega^1_{X_0(121)/K}$$ 
is a $K$-vector space of dimension $g$. The results in Section 2 of \cite{BGGP} show then that there exist functions 
in $K(X_0(121))$ which generate the field $K(X)$ over $K$\!.
\end{proof}

\vspace{1truemm}

\begin{cor}
The curves $X_{ns}(11)$ and $\cX$ cannot be covered by $X_1(M)$ over $K$\!, for any~$M$\!,
nor by $X_0(121)$ over \,$\overline{\Q}$.
\end{cor}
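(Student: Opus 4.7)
The plan is to combine the two preceding propositions in a direct way: Proposition \ref{points} establishes that neither $X_{ns}(11)$ nor $\cX$ has a $K$-rational point, and Corollary \ref{milibro} (which itself rests on Proposition \ref{endos}) descends any $\overline{\Q}$-covering by $X_0(121)$ down to $K$\!. The basic contradiction to exploit in both cases is that $X_0(121)$ and $X_1(M)$ carry a $\Q$-rational cusp at infinity, which forces the image of any morphism defined over $K$ to be a $K$-point on the target.

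For the assertion about $X_1(M)$, I would argue by contradiction. Suppose that for some $M$ there exists a non-constant morphism $\pi \colon X_1(M) \longrightarrow X$ defined over $K$\!,\, with $X$ equal to either $X_{ns}(11)$ or $\cX$. Since $X_1(M)$ has a $\Q$-rational cusp (the cusp at infinity), its image under $\pi$ is a $K$-rational point on $X$. This directly contradicts Proposition \ref{points}.

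For the assertion about $X_0(121)$ over $\overline{\Q}$, I would proceed analogously. Assume a non-constant morphism $X_0(121) \longrightarrow X$ exists over $\overline{\Q}$, where again $X$ denotes $X_{ns}(11)$ or $\cX$, both of which have genus greater than $1$ and are defined over $\Q \subset K$\!. By Corollary \ref{milibro}, there then exists a non-constant morphism $X_0(121) \longrightarrow X$ defined over $K$\!. Evaluating it at the $\Q$-rational cusp at infinity of $X_0(121)$ produces a $K$-rational point on $X$, once more contradicting Proposition \ref{points}.

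Since the nontrivial work has already been carried out in Propositions \ref{points}, \ref{endos} and Corollary \ref{milibro}, there is no real obstacle left; the only point one has to be careful about is to verify the hypotheses of Corollary \ref{milibro}, namely that $X$ is defined over $K$ and has genus $g > 1$, both of which are clear for $X_{ns}(11)$ (genus $4$) and $\cX$ (genus $2$).
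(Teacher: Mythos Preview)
Your proposal is correct and follows exactly the same route as the paper's proof: both use the $\Q$-rational cusp at infinity on $X_1(M)$ and $X_0(121)$ together with Proposition \ref{points}, invoking Corollary \ref{milibro} to descend the $\overline{\Q}$-covering by $X_0(121)$ down to $K$. The paper simply compresses this into a single sentence, while you spell out the contradiction in each case and check the genus hypothesis of Corollary \ref{milibro}.
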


\begin{proof}
Since the cusp at infinity on both $X_1(M)$ and $X_0(121)$ is defined over $\Q$, the result follows from
Proposition \ref{points} and Corollary \ref{milibro}.
\end{proof}

\vspace{15truemm}

\bibliography{X_ns(11)}{}

\begin{thebibliography}{BGGP05}

\bibitem[Bar10]{Baran}
B.~Baran.
\newblock Normalizers of non-split {C}artan subgroups, modular curves, and the
  class number one problem.
\newblock {\em J. Number Theory}, 130(12):2753--2772, 2010.

\bibitem[BGGP05]{BGGP}
Matthew~H. Baker, Enrique {Gonz{\'a}lez-Jim{\'e}nez}, Josep Gonz{\'a}lez, and
  Bjorn Poonen.
\newblock Finiteness results for modular curves of genus at least 2.
\newblock {\em Amer. J. Math.}, 127(6):1325--1387, 2005.

\bibitem[Che98]{Chen}
I.~Chen.
\newblock The {J}acobians of non-split {C}artan modular curves.
\newblock {\em Proc. London Math. Soc. (3)}, 77(1):1--38, 1998.

\bibitem[Cre97]{Cre}
J.~E. Cremona.
\newblock {\em Algorithms for modular elliptic curves}.
\newblock Cambridge {U}niversity {P}ress, Cambridge, 1997.

\bibitem[DFGS14]{DFGS}
V.~Dose, J.~Fern\'andez, J.~Gonz\'alez, and R.~Schoof.
\newblock The automorphism group of the non-split {C}artan modular curve of
  level 11.
\newblock {\em Journal of Algebra}, 417:95--102, 2014.

\bibitem[Gon91]{gon91}
J.~Gonz{\'a}lez.
\newblock Equations of hyperelliptic modular curves.
\newblock {\em Ann. Inst. Fourier (Grenoble)}, 41(4):779--795, 1991.

\bibitem[Gon12]{Gon}
J.~Gonz{\'a}lez.
\newblock Equations of bielliptic modular curves.
\newblock {\em JP J. Algebra Number Theory Appl.}, 27(1):45--60, 2012.

\bibitem[Hal98]{Hal}
E.~Halberstadt.
\newblock Sur la courbe modulaire {$X_{\text{nd\'ep}}(11)$}.
\newblock {\em Experiment. Math.}, 7(2):163--174, 1998.

\bibitem[KL81]{Ku-Lang}
D.~S. Kubert and S.~Lang.
\newblock {\em Modular units}, volume 244 of {\em Grundlehren der
  Mathematischen Wissenschaften [Fundamental Principles of Mathematical
  Science]}.
\newblock Springer-Verlag, New York, 1981.

\bibitem[Lig77]{Lig}
G.~Ligozat.
\newblock Courbes modulaires de niveau {$11$}.
\newblock In {\em Modular functions of one variable, {V} ({P}roc. {S}econd
  {I}nternat. {C}onf., {U}niv. {B}onn, {B}onn, 1976)}, pages 149--237. Lecture
  Notes in Math., Vol. 601. Springer, Berlin, 1977.

\bibitem[Maz77]{Ma77}
B.~Mazur.
\newblock Rational points on modular curves.
\newblock In {\em Modular functions of one variable, V (Proc. Second Internat.
  Conf., Univ. Bonn, Bonn, 1976)}, pages 107--148. Lecture Notes in Math., Vol.
  601. Springer, Berlin, 1977.

\bibitem[Maz91]{Ma91}
B.~Mazur.
\newblock Number theory as gadfly.
\newblock {\em American Mathematical Monthly}, 98(7):593--610, 1991.

\bibitem[Ogg74]{Ogg}
A.~P. Ogg.
\newblock Hyperelliptic modular curves.
\newblock {\em Bull. Soc. Math. France}, 102:449--462, 1974.

\bibitem[Ser72]{Se}
J.-P. Serre.
\newblock Propri\'et\'es galoisiennes des points d'ordre fini des courbes
  elliptiques.
\newblock {\em Invent. Math.}, 15(4):259--331, 1972.

\bibitem[Ser89]{Se89}
J.-P. Serre.
\newblock Lectures on the {M}ordell-{W}eil {T}heorem.
\newblock In {\em Aspects {M}ath.}, volume E15. Springer Vieweg,
  Braunschweig/Wiesbaden, 1989.

\bibitem[Shi95]{shi95}
M.~Shimura.
\newblock Defining equations of modular curves ${X}\sb 0({N})$.
\newblock {\em Tokyo J. Math.}, 18(2):443--456, 1995.

\end{thebibliography}
\bibliographystyle{alpha}

\end{document}